\documentclass[11pt,oneside,reqno]{amsart}

\usepackage{amssymb}
\usepackage{algpseudocode}
\usepackage{algorithm}
\usepackage{subcaption}
\usepackage[singlelinecheck=false]{caption}
\usepackage{verbatim}
\usepackage{graphicx}
\usepackage{bbm}
\usepackage[T1]{fontenc}
\usepackage[noadjust]{cite}
\RequirePackage{dsfont} \setlength{\textwidth}{15.5cm}
\setlength{\textheight}{23.0cm} \setlength{\voffset}{-1.5cm}
\setlength{\hoffset}{-1.5cm} \addtolength{\headheight}{3.5pt}
\frenchspacing \scrollmode
\allowdisplaybreaks
\usepackage{graphicx}
\usepackage{epstopdf}
\usepackage[usenames]{color}
\definecolor{darkred}{RGB}{139,0,0}
\definecolor{darkgreen}{RGB}{0,100,0}
\definecolor{darkmagenta}{RGB}{139,0,139}

\usepackage{amsmath}
\usepackage{tikz}

\makeatletter
\newcommand{\xleftrightarrow}[2][]{\ext@arrow 3359\leftrightarrowfill@{#1}{#2}}
\makeatother

\newcommand{\xdasharrow}[2][->]{
\tikz[baseline=-\the\dimexpr\fontdimen22\textfont2\relax]{
\node[anchor=south,font=\scriptsize, inner ysep=1.5pt,outer xsep=2.2pt](x){#2};
\draw[shorten <=3.4pt,shorten >=3.4pt,dashed,#1](x.south west)--(x.south east);
}
}

\newcommand{\DEBUG}{}

\ifdefined\DEBUG%

  \def\rem#1{{\marginpar{\raggedright\scriptsize #1}}}
  \newcommand{\pmr}[1]{\rem{\color{blue}{$\bullet$ #1}}}
  \newcommand{\ppr}[1]{\rem{\color{red}{$\bullet$ #1}}}
 \else%

  \newcommand{\ppr}[1]{}
  \newcommand{\pmr}[1]{}
 \fi

\theoremstyle{plain}
\newtheorem{theorem}{Theorem}
\newtheorem{lemma}{Lemma}

\theoremstyle{definition}
\newtheorem{remark}{Remark}

\usepackage{enumitem}
\setcounter{tocdepth}{1}
\begin{document}

\title
[Stability of randomized Taylor schemes]
{A note on the probabilistic stability \\ of randomized Taylor schemes}

\author[T. Bochacik]{Tomasz Bochacik}
\address{AGH University of Science and Technology,
Faculty of Applied Mathematics,\newline
Al. A.~Mickiewicza 30, 30-059 Krak\'ow, Poland}
\email{bochacik@agh.edu.pl}

\begin{abstract}
We study the stability of randomized Taylor schemes for ODEs. We consider three notions of probabilistic stability: asymptotic stability, mean-square stability, and stability in probability. We prove fundamental properties of the probabilistic stability regions and benchmark them against the absolute stability regions for deterministic Taylor schemes. 
\newline
\newline
\textbf{Key words:} randomized Taylor schemes, mean-square stability, asymptotic stability, stability in probability
\newline
\newline
\textbf{MSC 2010:} 65C05,\ 65L05,\ 65L20
\end{abstract}

\maketitle
\tableofcontents

\section{Introduction}

The study of randomized algorithms approximating the solutions of initial value problems for ODEs dates back to early 1990s, cf. \cite{stengle1, stengle2}. 

So far, the main focus has been on convergence of randomized algorithms, see for example \cite{backward_euler, JenNeuen, KruseWu_1}. Randomized algorithms tend to converge faster than their deterministic counterparts, especially for the problems of low regularity. Error bounds are usually established using certain martingale inequalities and classical tools such as Gronwall's inequality. In many papers, error analysis was combined with the discussion of algorithms' optimality (in the Information Based Complexity sense), cf. \cite{daun1, daun2, gocwin, HeinMilla, Kac1, Kac2, Kac3}, also in the setting of inexact information, cf. \cite{randRK,randEuler}. Randomized Taylor schemes, which will be of particular interest in this paper, were shown in \cite{HeinMilla} to achieve the optimal rate of convergence under mild regularity conditions. 

Other aspects of randomized algorithms, such as stability, have been largely omitted. The aim of this paper is to make a step towards filling this gap.

The stability of deterministic algorithms for ODEs has been comprehensively studied in the literature, see, for example, \cite{hairer}. In the stability analysis, we consider a test problem which is simple enough but retains features present in a wider class of problems. Then, we investigate for which choices of the step-size, the method reproduces the characteristics of the test equation, cf. \cite{higham2000}. In the context of ODEs, we usually take a linear, scalar, and autonomous test problem. 

The same test problem is used for stability analysis of randomized algorithms for ODEs. However, the approximated solution generated by a randomized method is random. Hence, analysis of its behaviour in infinity depends on the type of convergence. This naturally leads to notions of the mean-square stability and the asymptotic (almost-sure) stability, which have been previously considered in \cite{higham2000, mitsui} in the context of stochastic differential equations. This framework, enriched with the notion of stability in probability, was used in \cite{randRK, randEuler} to characterize stability regions of randomized Euler schemes and the randomized two-stage Runge-Kutta scheme.

This paper, according to our best knowledge, is the first attempt to apply the concept of probabilistic stability to higher-order randomized methods for ODEs, namely to the family of randomized Taylor schemes defined in \cite{HeinMilla}. Since these methods do not involve implicitness, they will not be A-stable. However, we can characterize probabilistic stability regions of  these methods in a quite detailed way. We establish their basic properties such as openness, boundedness, symmetry. Moreover, we study inclusions between them and compare them to the reference sets corresponding to deterministic methods. Finally, we provide counterexamples for some hypothetical properties which do not hold for probabilistic stability regions of randomized Taylor schemes in general (that is, for the method of any order).

In Section \ref{sec:prel} we give basic definitions and introduce the notation. In particular, we recall the definitions of the family of randomized Taylor schemes and of the probabilistic stability regions. In Section \ref{sec:main}, which is the main part of this paper, we characterize probabilistic stability regions for randomized Taylor schemes. Conclusions are discussed in Section \ref{sec:concl}. Finally, in Appendix \ref{sec:appendix} we prove some technical lemmas.

\section{Preliminaries} \label{sec:prel}

\subsection{The family of randomized Taylor schemes}
We deal with initial value problems of the following form: 
\begin{equation}
	\label{eq:ode}
		\left\{ \begin{array}{ll}
			z'(t)= f(t,z(t)), \ t\in [a,b], \\
			z(a) = \eta, 
		\end{array}\right.
\end{equation}
where $-\infty < a < b < \infty$, $\eta\in\mathbb{R}^d$, $f\colon [a,b]\times\mathbb{R}^d\to\mathbb{R}^d$, $d\in\mathbb{Z}_+$. 

We use the definition of the family of randomized Taylor schemes given in \cite{HeinMilla}. We fix $n\in\mathbb{N}$, $n\geq 2$ and put $h=\frac{b-a}{n}$, $t_j = a+jh$ for $j\in\{0,1,\ldots,n\}$, $\theta_j = t_{j-1} + \tau_jh$, $\tau_j\sim U(0,1)$ for $j\in\{1,\ldots,n\}$. We assume that the family of random variables $\{\tau_1,\ldots,\tau_n\}$ is independent.

Let $r\in\mathbb{N}$ (we assume that $0\in\mathbb{N}$). We set $v^r_0=\eta$. If $k\in\{1,\ldots,n\}$ and $v^r_{k-1}$ is already defined, we consider the following local problem:
\begin{equation}
	\label{eq:local}
		\left\{ \begin{array}{ll}
			(u^r_k)'(t)= f(t,u^r_k(t)), \ t\in [t_{k-1},t_{k}], \\
			u^r_k(t_{k-1}) = v^r_{k-1} .
		\end{array}\right.
\end{equation}
We define
\begin{equation}
	\label{eq:polynomial}
		p^r_k(t) = \sum_{j=0}^{r+1} \frac{(u^r_k)^{(j)}(t_{k-1})}{j!} (t-t_{k-1})^j \mathbbm{1}_{[t_{k-1},t_k]}(t)
\end{equation}
and 
\begin{equation}
	\label{eq:taylor}
		v^r_k = p^r_k(t_k) + h\cdot \bigl( f(\theta_k, p^r_k(\theta_k)) - (p^r_k)'(\theta_k) \bigr).
\end{equation}
Note that for $t\in (t_{k-1},t_k)$, where $k\in\{1,\ldots,n\}$, we have
\begin{equation}
	\label{eq:polynomial-diff}
		(p^r_k)'(t) = \sum_{j=0}^{r+1} \frac{(u^r_k)^{(j)}(t_{k-1})}{(j-1)!} (t-t_{k-1})^{j-1}.
\end{equation}

The algorithm returns the sequence $\bigl( v_k^r \bigr)_{k=0}^n$, which approximates values of the exact solution $z$ of \eqref{eq:ode} at points $t_0,\ldots,t_n$. If we neglect the second term in the right-hand side of \eqref{eq:taylor}, we get the classical deterministic Taylor scheme.

Note that for $r=0$ we obtain the definition of randomized two-stage Runge-Kutta scheme given in \cite{randRK, KruseWu_1}. Stability analysis for this algorithm was performed in \cite{randRK}.

\subsection{Probabilistic stability of randomized Taylor schemes}
Let us consider the classical test problem
\begin{equation}
	\label{TEST_PROBLEM}
		\left\{ \begin{array}{ll}
			z'(t)= \lambda z(t), \ t\geq 0, \\
			z(0) = \eta
		\end{array}\right.
\end{equation}
with $\lambda\in\mathbb{C}$ and $\eta\neq 0$. The exact solution of \eqref{TEST_PROBLEM} is $z(t)=\eta\exp(\lambda t)$. We note that 
\begin{equation*}
    \lim\limits_{t\to\infty} z(t)=0 \ \hbox{iff} \ \Re(\lambda)<0.
\end{equation*}

For a fixed step-size $h>0$, we apply the scheme \eqref{eq:taylor} with the mesh $t_k=kh$, $k\in\mathbb{N}$, to the test problem \eqref{TEST_PROBLEM}. The local problem \eqref{eq:local} takes on the following form:
\begin{equation}
	\label{eq:local2}
		\left\{ \begin{array}{ll}
			(u^r_k)'(t)= \lambda u^r_k(t), \ t\in [t_{k-1},t_{k}], \\
			u^r_k(t_{k-1}) = v^r_{k-1} .
		\end{array}\right.
\end{equation}
As a result, we obtain a sequence $(v^r_k)_{k=0}^\infty$, which approximates the values of $z$ at $t_k$ ($k\in\mathbb{N}$) and whose values are given by
\begin{equation}
	\label{eq:v}
        v^r_k = \eta \cdot \prod_{l=1}^k f_{r,\lambda h} (\tau_l),
\end{equation}
where $\tau_1,\tau_2,\ldots$ are independent random variables with uniform distribution on $[0,1]$, and
\begin{equation} \label{eq:f}
    f_{r,z} \colon \mathbb{R} \ni t \mapsto \sum_{j=0}^{r+1} \frac{z^j}{j!} + \frac{t^{r+1}}{(r+1)!} z^{r+2} \in \mathbb{C}.
\end{equation}

In fact, using \eqref{eq:local2} and proceeding by induction with respect to $j$, we get 
\begin{equation*}
        (u^r_k)^{(j)}(t_{k-1}) = \lambda^j v^r_{k-1}.
\end{equation*}
Thus, by taking $t=t_k$ and $t=\theta_k$ in \eqref{eq:polynomial} and \eqref{eq:polynomial-diff}, we get
\begin{align*}
        p^r_k(t_k) & = \sum_{j=0}^{r+1} \frac{(\lambda h)^j}{j!} v^r_{k-1},  \\ p^r_k(\theta_k) & = \sum_{j=0}^{r+1} \frac{(\lambda h \tau_k)^j}{j!} v^r_{k-1},  \\ (p^r_k)'(\theta_k) & = \sum_{j=1}^{r+1} \frac{\lambda^{j}(h\tau_k)^{j-1}}{(j-1)!} v^r_{k-1} = \lambda\sum_{j=0}^{r} \frac{(\lambda h\tau_k)^{j}}{j!} v^r_{k-1}. 
\end{align*}
By \eqref{eq:taylor} and the above three lines, we obtain the following recurrence:
\begin{align*}
 v^r_k & = p^r_k(t_k) + \lambda h p^r_k(\theta_k) - h (p^r_k)'(\theta_k) = \Bigl( \sum_{j=0}^{r+1} \frac{(\lambda h)^j}{j!} + \lambda h \cdot \frac{(\lambda h \tau_k)^{r+1}}{(r+1)!} \Bigr) \cdot v^r_{k-1},
\end{align*}
which leads to \eqref{eq:v}.

Similarly as in \cite{randRK}, we consider three sets
\begin{eqnarray}
    &&\mathcal{R}^r_{MS} = \{\lambda h\in\mathbb{C} \colon v^r_k \to 0 \ \hbox{in $L^2(\Omega)$ as} \ k\to \infty \}, \label{eq:regMS} \\
    &&\mathcal{R}^r_{AS} = \{\lambda h\in\mathbb{C} \colon v^r_k \to 0 \  \hbox{almost surely as} \ k\to \infty\},\label{eq:regAS} \\
     &&\mathcal{R}^r_{SP} = \left\{ \lambda h\in \mathbb{C} \colon  v^r_k \to 0 \text{ in probability as} \ k\to \infty  \right\},
    \label{eq:regSP}
\end{eqnarray}
where we call $\mathcal{R}^r_{MS}$ the region of mean-square stability,  $\mathcal{R}^r_{AS}$ -- the region of asymptotic stability, and $\mathcal{R}^r_{SP}$ -- the region of stability in probability. 

\begin{remark}
If $h\lambda \in \mathcal{R}^r_{AS}$, the approximated solution of the test problem converges to $0$ for virtually every fixing of $\tau_1,\tau_2,\ldots$ from the interval $[0,1]$. That is, for almost every $\omega\in\Omega$ and for every $\varepsilon>0$, there exists $k_0=k_0(\omega, \varepsilon)$ such that $|v_k^r(\omega)|<\varepsilon$ for every $k\geq k_0$.  

The mean-square stability implies that for each pre-specfied threshold $\varepsilon>0$ and each pre-specified probability level $\delta\in (0,1)$, the point-wise approximations $v^r_k$ (for sufficiently big $k$) are bounded by $\varepsilon$ with probability at least $1-\delta$. In fact, let us assume that $h\lambda\in\mathcal{R}^r_{MS}$ and let us set $\varepsilon>0$ and $\delta\in (0,1)$. Then there exists $k'_0=k'_0(\varepsilon,\delta)\in\mathbb{Z}_+$ such that for each $k\geq k'_0$, $\|v^r_k\|_{L^2(\Omega)}<\sqrt{\delta}\varepsilon$. By Markov's inequality,
$$\mathbb{P}\bigl( |v^r_k| \geq \varepsilon \bigr) \leq \frac{\|v^r_k\|^2_{L^2(\Omega)}}{\varepsilon^2} <\delta.$$

Hence, the asymptotic stability guarantees that for each specific run of the algorithm, the approximated solution will converge to the exact solution in infinity (provided that $h\lambda \in \mathcal{R}^r_{AS}$). However, the pace of convergence may significantly vary for different fixings of $\tau_1,\tau_2,\ldots$. On the other hand, the mean-square stability provides an insight on whether we can expect consistent asymptotic behaviour in many independent runs of the algorithm. 
\end{remark}

\section{Main results} \label{sec:main}

For fixed $r\in\mathbb{N}$, let
\begin{align} 
    F_r\colon & \mathbb{C} \ni z \mapsto \mathbb{E} |f_{r,z}(\tau)|^2 = \int\limits_0^1 |f_{r,z}(t)|^2\,\mathrm{d}t \label{eq:F} \\ & \ \ \ \ \ \ \ \ \ \ \ \ \ \ \ \ \ \ \ \ \ \ \ \ \ \ \ \ \ \ \ \ \ \   = \int\limits_0^1 \Bigl|\sum_{j=0}^{r+1} \frac{z^j}{j!} + \frac{t^{r+1}}{(r+1)!} z^{r+2}\Bigr|\,\mathrm{d}t \in [0,\infty),  \notag \\
    G_r\colon & \mathbb{C} \ni z \mapsto \mathbb{E}\bigl( \ln |f_{r,z}(\tau)|\bigr) = \int\limits_0^1 \ln |f_{r,z}(t)|\,\mathrm{d}t \in \mathbb{R}, \label{eq:G}
\end{align}
where $\tau\sim U([0,1])$ and $f_{r,z}$ is given by \eqref{eq:f}. From \eqref{eq:v}, \eqref{eq:regMS} and \eqref{eq:F} we obtain
\begin{equation} \label{eq:MS}
    \mathcal{R}_{MS}^r = \bigl\{ z\in\mathbb{C} \colon F_r(z) < 1 \bigr\} = \left\{ z\in\mathbb{C} \colon \int\limits_0^1 \Bigl|\sum_{j=0}^{r+1} \frac{z^j}{j!} + \frac{t^{r+1}}{(r+1)!} z^{r+2}\Bigr|\,\mathrm{d}t < 1 \right\}.
\end{equation}

For further analysis of the probabilistic regions of randomized Taylor schemes, we need Lemmas \ref{lemma:R_AS}--\ref{A-cont}. They are formulated and proven in Appendix \ref{sec:appendix}.

By Lemma \ref{lemma:R_AS} and Corollary 1 in \cite{randRK}, regions $\mathcal{R}^r_{AS}$ and $\mathcal{R}^r_{SP}$ are equal and can be expressed as
\begin{equation} \label{eq:RAS-SP}
    \mathcal{R}^r_{AS} = \mathcal{R}^r_{SP} = \bigl\{ z\in\mathbb{C} \colon G_r(z) < 0 \bigr\}.
\end{equation} 

Apart from $\mathcal{R}_{MS}^r, \mathcal{R}_{AS}^r$ and $\mathcal{R}_{SP}^r$, we consider the following reference set:
\begin{equation} \label{eq:ref}
    \mathcal{R}_{ref}^r =\Bigl\{ z\in\mathbb{C} \colon \eta\prod_{l=1}^k \mathbb{E} f_{r,z}(\tau_l) \to 0 \text{ as } k\to\infty \Bigr\} = \Bigl\{ z\in\mathbb{C} \colon \Bigl| \sum_{j=0}^{r+2} \frac{z^j}{j!} \Bigr| < 1 \Bigr\}.
\end{equation}
Note that the reference set $\mathcal{R}_{ref}^r$ is the absolute stability region for the deterministic Taylor scheme of order $r+2$. Hence, our analysis is consistent with \cite{randRK}, where probabilistic stability regions of the randomized two-stage Runge-Kutta scheme (i.e., the randomized Taylor scheme with $r=0$) were benchmarked against the absolute stability region of the mid-point method (i.e., the deterministic Taylor scheme with $r=1$).

Now we are ready to establish the main results of this paper -- Theorem \ref{theorem_MS_1} and Theorem \ref{theorem_MS_2}. They extend Theorem 3 and Theorem 4 from \cite{randRK}, which covered only the case of $r=0$, onto the case of any $r\in\mathbb{N}$. However, as of now we have not managed to generalize the results from \cite{randRK} related to stability intervals.

\begin{theorem}
\label{theorem_MS_1}
For each $r\in\mathbb{N}$, the sets $\mathcal{R}^r_{MS}$, $\mathcal{R}^r_{AS}$ and $\mathcal{R}^r_{ref}$ are open and symmetric with respect to the real axis.
\end{theorem}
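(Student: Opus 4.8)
The plan is to get symmetry from a one-line computation and openness by exhibiting each region as a sublevel set of a suitably regular function; the region $\mathcal{R}^r_{AS}$ is the only one that requires real care.

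\emph{Symmetry.} For real $t$, conjugating \eqref{eq:f} term by term gives $f_{r,\bar z}(t)=\overline{f_{r,z}(t)}$, hence $|f_{r,\bar z}(t)|=|f_{r,z}(t)|$ for every $t\in[0,1]$; likewise $\bigl|\sum_{j=0}^{r+2}\bar z^{\,j}/j!\bigr|=\bigl|\sum_{j=0}^{r+2}z^{\,j}/j!\bigr|$. Substituting these identities into \eqref{eq:MS}, \eqref{eq:RAS-SP} and \eqref{eq:ref} shows that $z$ satisfies the defining inequality of each of $\mathcal{R}^r_{MS}$, $\mathcal{R}^r_{AS}$, $\mathcal{R}^r_{ref}$ if and only if $\bar z$ does; that is exactly invariance under reflection in the real axis.

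\emph{Openness of $\mathcal{R}^r_{ref}$ and $\mathcal{R}^r_{MS}$.} Each is of the form $\{z\in\mathbb{C}:\varphi(z)<1\}$ with $\varphi$ continuous, hence open. For $\mathcal{R}^r_{ref}$ take $\varphi(z)=\bigl|\sum_{j=0}^{r+2}z^{\,j}/j!\bigr|$, the modulus of a polynomial. For $\mathcal{R}^r_{MS}$ take $\varphi=F_r$: expanding $|f_{r,z}(t)|^2$ and integrating in $t$ exhibits $F_r$ as a polynomial in $\Re z$ and $\Im z$ (alternatively, continuity of $F_r$ follows from dominated convergence, since $(z,t)\mapsto|f_{r,z}(t)|^2$ is bounded on $\overline B(z_0,\rho)\times[0,1]$), so $F_r$ is continuous.

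\emph{Openness of $\mathcal{R}^r_{AS}=\{z\in\mathbb{C}:G_r(z)<0\}$.} This is the main obstacle: $\ln|f_{r,z}(\cdot)|$ acquires logarithmic singularities whenever $f_{r,z}$ vanishes somewhere on $[0,1]$, so $G_r$ cannot be expected to be continuous everywhere. The key point is that upper semicontinuity of $G_r$ already makes $\{G_r<0\}$ open, and $G_r$ is upper semicontinuous. Indeed, fix $z_0$ and a closed disc $\overline B(z_0,\rho)$; on it one has $\ln|f_{r,z}(t)|\le C$ uniformly in $t\in[0,1]$ for some finite $C$, while for each fixed $t$ the map $z\mapsto\ln|f_{r,z}(t)|\in[-\infty,\infty)$ is continuous. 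Given any sequence $z_k\to z_0$, apply Fatou's lemma to the non-negative functions $t\mapsto C-\ln|f_{r,z_k}(t)|$: this yields $\limsup_k G_r(z_k)\le\int_0^1\ln|f_{r,z_0}(t)|\,\mathrm{d}t=G_r(z_0)$, where finiteness of the last integral is guaranteed by \eqref{eq:G} (it rests on the integrability of $\ln|t-t_0|$ near each of the finitely many zeros $t_0\in[0,1]$ of the polynomial $f_{r,z_0}(\cdot)$). Hence $G_r$ is upper semicontinuous, so $G_r(z_0)<0$ forces $G_r<0$ on a neighbourhood of $z_0$, and $\mathcal{R}^r_{AS}$ is open. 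The delicate point to double-check is precisely that one only needs the one-sided estimate furnished by the limsup form of Fatou together with the locally uniform upper bound on $\ln|f_{r,z}(t)|$; no control of the singularities from below (which would be needed for lower semicontinuity of $G_r$) enters this theorem.
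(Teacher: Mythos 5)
Your proof is correct, and for the symmetry claim and the openness of $\mathcal{R}^r_{ref}$, $\mathcal{R}^r_{MS}$ it coincides with the paper's argument (the paper proves continuity of $F_r$ by an explicit difference estimate in Lemma \ref{lemma_F}, whereas you observe that $F_r$ is a polynomial in $\Re z,\Im z$ or invoke dominated convergence; either works). The genuine divergence is in the treatment of $\mathcal{R}^r_{AS}$. The paper proves full continuity of $G_r$ (Lemma \ref{A-cont}): away from the singular set $h_r(z)\in[0,1]$ it establishes locally uniform convergence of $t\mapsto\ln|t^{r+1}-z_n|$ via the mean value theorem, and near the singular set it isolates the bad region $A_\delta$ and controls its contribution uniformly by the separate technical Lemma \ref{lemma:aux}. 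You instead prove only upper semicontinuity of $G_r$, which is all that openness of the sublevel set $\{G_r<0\}$ requires, and you get it almost for free from Fatou's lemma applied to $C-\ln|f_{r,z_k}(t)|\ge 0$ with $C$ a locally uniform upper bound; the only analytic input is the finiteness of $G_r(z_0)$, i.e.\ local integrability of $\ln|t-t_0|$ (the paper's Lemma \ref{lemma:R_AS}). Your route is shorter and avoids Lemma \ref{lemma:aux} entirely, and you correctly flag that no lower control of the singularities is needed \emph{for this theorem}. What the paper's heavier approach buys is the two-sided statement: full continuity of $G_r$ (equivalently of $H_r$) is reused in the proof of Theorem \ref{theorem_MS_2}, where $\inf_{|z|\le 2}H_r(z)>-\infty$ is deduced from continuity on a compact set --- a step for which upper semicontinuity alone would not suffice. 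So your argument proves Theorem \ref{theorem_MS_1} completely, but it is not a drop-in replacement for Lemma \ref{A-cont} in the rest of the paper.
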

\begin{proof}
Sets $\mathcal{R}^r_{MS} = F_r^{-1}((-\infty,1))$ and $\mathcal{R}^r_{AS} = G_r^{-1}((-\infty,0))$ are open due to Lemma \ref{lemma_F} and Lemma \ref{A-cont}, respectively. Since the function 
\begin{equation*}
\hat F_r \colon \mathbb{C} \ni z \mapsto \Bigl| \sum_{j=0}^{r+2} \frac{z^j}{j!} \Bigr| \in [0,\infty)
\end{equation*}
is continuous, set $\mathcal{R}^r_{ref} = \hat F_r^{-1} ((-\infty,1))$ is open as well. 

Note that $|f_{r,z}(t)| = |f_{r,\bar z}(t)|$ for all $z\in\mathbb{C}$ and $t\in [0,1]$, which implies that $F_r(z) = F_r(\bar z)$ and $G_r(z)=G_r(\bar z)$ for all $z\in\mathbb{C}$, cf. \eqref{eq:f}, \eqref{eq:F} and \eqref{eq:G}. This combined with \eqref{eq:MS} and \eqref{eq:RAS-SP} immediately gives the symmetry of $\mathcal{R}^r_{MS}$ and $\mathcal{R}^r_{AS}$ with respect to the real axis. The same property for $\mathcal{R}^r_{ref}$ follows from the fact that 
$$ \Bigl| \sum_{j=0}^{r+2} \frac{z^j}{j!} \Bigr| = \Bigl| \sum_{j=0}^{r+2} \frac{\bar z^j}{j!} \Bigr|$$
for all $z\in\mathbb{C}$.
\end{proof}

The following Theorem \ref{theorem_MS_2} shows that the mean-square stability is a stronger property than the asymptotic stability. Furthermore, probabilistic stability regions of the randomized Taylor scheme for any $r\in\mathbb{N}$ are bounded. This implies that none of randomized Taylor schemes is A-stable in any of the considered probabilistic senses (i.e., the left complex half-plane is not contained in any of the considered stability regions). However, the left half-plane is contained in the sum over $r$ of stability regions. 

\begin{theorem}
\label{theorem_MS_2}
For each $r\in\mathbb{N}$, there exists $\gamma_r\in (0,\infty)$ such that
\begin{equation} \label{eq:MSbound}
    \mathcal{R}^r_{MS}\subset\mathcal{R}^r_{ref}\cap\mathcal{R}^r_{AS}\subset\mathcal{R}^r_{ref}\cup\mathcal{R}^r_{AS}\subset \{z\in \mathbb{C} \colon |z|<\gamma_r\}.
\end{equation} 
Moreover, 
\begin{equation} \label{eq:C-}
    \mathbb{C}_{-} \subset \bigcup_{r=0}^\infty \mathcal{R}^r_{MS}.
\end{equation}
\end{theorem}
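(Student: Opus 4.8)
The plan is to establish the four inclusions in \eqref{eq:MSbound} one at a time, working from the middle outward, and then derive \eqref{eq:C-} by a direct computation at small $|z|$. The middle inclusion $\mathcal{R}^r_{ref}\cap\mathcal{R}^r_{AS}\subset\mathcal{R}^r_{ref}\cup\mathcal{R}^r_{AS}$ is trivial, so the content is in the two outer ones. For $\mathcal{R}^r_{MS}\subset\mathcal{R}^r_{AS}$, I would use Jensen's inequality: since $\ln$ is concave, $G_r(z) = \mathbb{E}\bigl(\ln|f_{r,z}(\tau)|\bigr) = \tfrac12\mathbb{E}\bigl(\ln|f_{r,z}(\tau)|^2\bigr) \le \tfrac12\ln\mathbb{E}|f_{r,z}(\tau)|^2 = \tfrac12\ln F_r(z)$. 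Hence $F_r(z)<1$ forces $G_r(z)<0$, which by \eqref{eq:MS} and \eqref{eq:RAS-SP} is exactly $\mathcal{R}^r_{MS}\subset\mathcal{R}^r_{AS}$. For $\mathcal{R}^r_{MS}\subset\mathcal{R}^r_{ref}$, I would apply the inequality $|\mathbb{E} Y|\le (\mathbb{E}|Y|^2)^{1/2}$ to $Y=f_{r,z}(\tau)$: the left side is $\bigl|\sum_{j=0}^{r+2} z^j/j!\bigr|$ because $\mathbb{E}\,\tau^{r+1}=1/(r+2)$ kills the extra term, turning $z^{r+2}/(r+1)!\cdot 1/(r+2)$ into $z^{r+2}/(r+2)!$; the right side is $F_r(z)^{1/2}$. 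So $F_r(z)<1$ gives $\hat F_r(z)<1$, i.e.\ $z\in\mathcal{R}^r_{ref}$.

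The boundedness inclusion $\mathcal{R}^r_{ref}\cup\mathcal{R}^r_{AS}\subset\{|z|<\gamma_r\}$ requires two separate arguments. For $\mathcal{R}^r_{ref}$, boundedness is immediate: $\hat F_r(z)=\bigl|\sum_{j=0}^{r+2} z^j/j!\bigr|\to\infty$ as $|z|\to\infty$ since it is (the modulus of) a nonconstant polynomial, so $\mathcal{R}^r_{ref}$ is bounded, say by some $\gamma'_r$. For $\mathcal{R}^r_{AS}$, I would show $G_r(z)\to+\infty$ as $|z|\to\infty$. Writing $f_{r,z}(t) = z^{r+2}\bigl(\tfrac{t^{r+1}}{(r+1)!} + O(|z|^{-1})\bigr)$ for large $|z|$ — more precisely $f_{r,z}(t)/z^{r+2} = \tfrac{t^{r+1}}{(r+1)!} + \sum_{j=0}^{r+1} z^{j-r-2}/j!$ — we get $\ln|f_{r,z}(t)| = (r+2)\ln|z| + \ln\bigl|\tfrac{t^{r+1}}{(r+1)!}+o(1)\bigr|$. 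Integrating over $t\in[0,1]$, the dominant term is $(r+2)\ln|z|\to\infty$; the correction term $\int_0^1 \ln\bigl|\tfrac{t^{r+1}}{(r+1)!}+o(1)\bigr|\,\mathrm{d}t$ stays bounded, which one checks by noting $\int_0^1 \ln(t^{r+1})\,\mathrm{d}t = -(r+1)$ is finite and the perturbation is uniformly controlled away from a shrinking neighborhood of $t=0$ while the singularity at $t=0$ is integrable. Hence $\mathcal{R}^r_{AS}$ is bounded by some $\gamma''_r$, and we take $\gamma_r=\max\{\gamma'_r,\gamma''_r\}$. The main obstacle is this last estimate: making the claim "$\int_0^1\ln|f_{r,z}(t)|\,\mathrm{d}t\to\infty$" fully rigorous needs care because $f_{r,z}(t)$ can vanish for some $t\in[0,1]$ (its real zeros contribute $-\infty$ to the integrand pointwise), so one must argue that the set of such $t$ has controlled measure and the logarithmic singularities are integrable — uniformly in $z$. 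I expect one can invoke a Jensen-type lower bound or factor $f_{r,z}$ and track the roots, but this is where the real work lies; it is plausible the paper isolates this as one of the technical Appendix lemmas.

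Finally, for \eqref{eq:C-}, fix $z\in\mathbb{C}_-$, i.e.\ $\Re(z)<0$. The claim is that $z\in\mathcal{R}^r_{MS}$ for all sufficiently large $r$. Here I would estimate $F_r(z)$ directly: $f_{r,z}(t) = e^z - \sum_{j=r+2}^\infty z^j/j! + \tfrac{t^{r+1}}{(r+1)!}z^{r+2}$, so $|f_{r,z}(t)| \le |e^z| + \sum_{j=r+2}^\infty |z|^j/j! + \tfrac{|z|^{r+2}}{(r+1)!}$, and since $\Re(z)<0$ we have $|e^z| = e^{\Re(z)}<1$. Both remainder-type terms tend to $0$ as $r\to\infty$ (tails of the exponential series, uniformly in $t\in[0,1]$), so for $r$ large enough $\sup_{t\in[0,1]}|f_{r,z}(t)|<1$, whence $F_r(z) = \int_0^1 |f_{r,z}(t)|^2\,\mathrm{d}t < 1$ and $z\in\mathcal{R}^r_{MS}$. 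Thus every point of $\mathbb{C}_-$ lies in $\bigcup_{r=0}^\infty \mathcal{R}^r_{MS}$, which is \eqref{eq:C-}. This last part is routine once the tail bounds are written out.
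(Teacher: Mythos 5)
Your overall strategy matches the paper's, and most steps are complete and correct. Two of your arguments genuinely differ from the paper's. For $\mathcal{R}^r_{MS}\subset\mathcal{R}^r_{AS}$ you use Jensen's inequality, $G_r(z)\le\tfrac12\ln F_r(z)$, on the analytic characterizations of the two regions; the paper instead observes that $L^2$-convergence implies convergence in probability and then invokes $\mathcal{R}^r_{SP}=\mathcal{R}^r_{AS}$ from \eqref{eq:RAS-SP}. Both are valid (your route quietly relies on Lemma \ref{lemma:R_AS} to know $G_r(z)$ is finite, and on $F_r(z)>0$, which holds since $f_{r,z}$ is a nonzero polynomial in $t$). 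For \eqref{eq:C-} you argue directly: for fixed $\Re(z)<0$ the tail $\sum_{j\ge r+2}|z|^j/j!$ and the term $|z|^{r+2}/(r+1)!$ vanish as $r\to\infty$, so $\sup_t|f_{r,z}(t)|<1$ eventually. The paper argues by contraposition via a reverse-Fatou passage of $\limsup_r$ inside the integral; your version is more elementary and arguably cleaner. The inclusion $\mathcal{R}^r_{MS}\subset\mathcal{R}^r_{ref}$ and the boundedness of $\mathcal{R}^r_{ref}$ are handled identically to the paper.

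The one genuine gap is the step you yourself flag: the uniform lower bound on the correction term in the boundedness argument for $\mathcal{R}^r_{AS}$. Your reduction $\ln|f_{r,z}(t)|=(r+2)\ln|z|-\ln((r+1)!)+\ln\bigl|t^{r+1}-w\bigr|$ with $w=w(z)\to 0$ as $|z|\to\infty$ is exactly the paper's decomposition $G_r(z)=\ln\bigl(|z|^{r+2}/(r+1)!\bigr)+H_r(h_r(z))$, and the missing piece is precisely $\inf_{w\in\mathbb{C}}H_r(w)>-\infty$ where $H_r(w)=\int_0^1\ln|t^{r+1}-w|\,\mathrm{d}t$. The paper closes this in two strokes: for $|w|>2$ one has $H_r(w)\ge\int_0^1\ln(2-t^{r+1})\,\mathrm{d}t\ge 0$ directly, while on the compact set $|w|\le 2$ the continuity of $H_r$ (Lemma \ref{A-cont}, whose hard case near $w\in[0,1]$ is exactly the logarithmic-singularity control of Lemma \ref{lemma:aux}) plus the Weierstrass extreme value theorem gives a finite infimum. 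So your instinct that ``the real work'' is isolated in the Appendix is right; to complete your proof you would either cite those lemmas or reproduce the estimate that $\int_{\{|t^{r+1}-\Re(w)|\le\delta\}}\ln|t^{r+1}-w|\,\mathrm{d}t$ is small uniformly for $w$ near $[0,1]$, which is the content of Lemma \ref{lemma:aux}. As written, the boundedness of $\mathcal{R}^r_{AS}$ is asserted but not proved.
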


\begin{proof}
Inclusion $\mathcal{R}_{MS}^r\subset\mathcal{R}_{ref}^r$ for all $r\in\mathbb{N}$ follows from the following inequality:
$$ F_r(z) = \mathbb{E} \bigl| f_{r,z}(\tau) \bigr|^2 \geq  \bigl| \mathbb{E} f_{r,z}(\tau) \bigr|^2 = \Bigl| \sum_{j=0}^{r+1} \frac{z^j}{j!} + \frac{\mathbb{E}\tau^{r+1}}{(r+1)!} z^{r+2} \Bigr|^2 = \Bigl| \sum_{j=0}^{r+2} \frac{z^j}{j!} \Bigr|^2.$$
We used the fact that $\mathbb{E}|Z|^2 \geq |\mathbb{E}Z|^2$ for any complex random variable $Z$. Since convergence in $L^2(\Omega)$ implies convergence in probability, we have $\mathcal{R}_{MS}^r\subset\mathcal{R}_{SP}^r = \mathcal{R}_{AS}^r$ for all $r\in\mathbb{N}$, cf. \eqref{eq:regMS}, \eqref{eq:regSP} and \eqref{eq:RAS-SP}. As a result, we obtain the first inclusion in \eqref{eq:MSbound}.

Region $\mathcal{R}_{ref}^r$ is bounded for each $r\in\mathbb{N}$ because
$$\Bigl| \sum_{j=0}^{r+2} \frac{z^j}{j!} \Bigr| \geq \frac{|z|^{r+2}}{(r+2)!} - \sum_{j=0}^{r+1} \frac{|z|^j}{j!}$$
and the right-hand side of the above inequality tends to infinity when $|z|\to\infty$. Thus, there exists $\gamma^1_r>0$ such that $\Bigl| \sum_{j=0}^{r+2} \frac{z^j}{j!} \Bigr|\geq 1$ for all $z\in\mathbb{C}$ with $|z| \geq \gamma^1_r$. 

To show that $\mathcal{R}_{AS}^r$ is bounded, let us express $G_r(z)$ for $z\in\mathbb{C}\setminus\{0\}$ in the same fashion as in the proof of Lemma \ref{A-cont}:
\begin{equation} \label{eq:RASbound}
    G_r(z) = \ln\Bigl(\frac{|z|^{r+2}}{(r+1)!} \Bigr) + H_r(h_r(z)) \geq (r+2) \ln |z| - \ln ((r+1)!) + \inf_{z\in\mathbb{C}} H_r(z),
\end{equation}
cf. \eqref{eq:G2}, \eqref{eq:h} and \eqref{eq:H}. Note that $\displaystyle \inf_{z\in\mathbb{C}} H_r(z)$ is finite because 
$$ H_r(z) \geq  \int\limits_0^1 \ln \bigl||z| - t^{r+1}\bigr|\,\mathrm{d}t\geq  \int\limits_0^1 \ln (2 - t^{r+1})\,\mathrm{d}t\geq 0 $$
for $|z|>2$ and 
$\displaystyle \inf_{|z|\leq 2} H_r(z)$ is finite by Lemma \ref{A-cont} and the Weierstrass extreme value theorem. The right-hand side of \eqref{eq:RASbound} tends to infinity when $|z|\to\infty$. Hence, there exists $\gamma^2_r>0$ such that $G_r(z)\geq 0$ for all $z\in\mathbb{C}$ with $|z|\geq \gamma^2_r$. Taking $\gamma_r = \max\{\gamma^1_r,\gamma^2_r\}$ leads to the third inclusion in \eqref{eq:MSbound}.

To see \eqref{eq:C-}, let us consider $z\in\mathbb{C}$ such that $z\notin \bigcup_{r=0}^\infty \mathcal{R}^r_{MS}$. Then $F_r(z)\geq 1$ for all $r\in\mathbb{N}$ and as a result
\begin{equation} \label{eq:limsup}
   1 \leq \limsup_{r\to\infty} \int\limits_0^1 \bigl| f_{r,z}(t)\bigr|^2\,\mathrm{d}t \leq \int\limits_0^1 \limsup_{r\to\infty} \bigl| f_{r,z}(t)\bigr|^2\,\mathrm{d}t = \int\limits_0^1  | e^z |^2\,\mathrm{d}t = e^{2\Re(z)}. 
\end{equation}
Hence, $\Re(z)\geq 0$ and \eqref{eq:C-} follows. The second inequality in \eqref{eq:limsup} is based on Fatou's lemma. Note that $f_{r,z}$ is continuous in $\mathbb{R}$, which guarantees that the function $[0,1]\ni t \mapsto \bigl| f_{r,z}(t)\bigr|^2 \in [0,\infty)$ is continuous as well and thus Borel measurable. In the third passage in \eqref{eq:limsup} we use the fact that $\displaystyle \lim_{r\to\infty} f_{r,z}(t)$ exists and is equal to $e^z$ for each $t\in[0,1]$. 
\end{proof}

In Figure \ref{fig:regions-ref-ms}, we plot $\mathcal{R}^{r}_{ref}$, $\mathcal{R}^{r}_{MS}$, and $\mathcal{R}^{r}_{AS}$ for $r\in\{0,1,2,3,4\}$. Based on these plots and some calculations, we have rejected several hypotheses about potential properties of these regions. Counterexamples are provided in Remarks \ref{remark:inclusions}--\ref{remark:connect}.

\begin{figure}
    \centering
    \begin{subfigure}{0.45\textwidth}
    \centering
        \includegraphics[width=\linewidth]{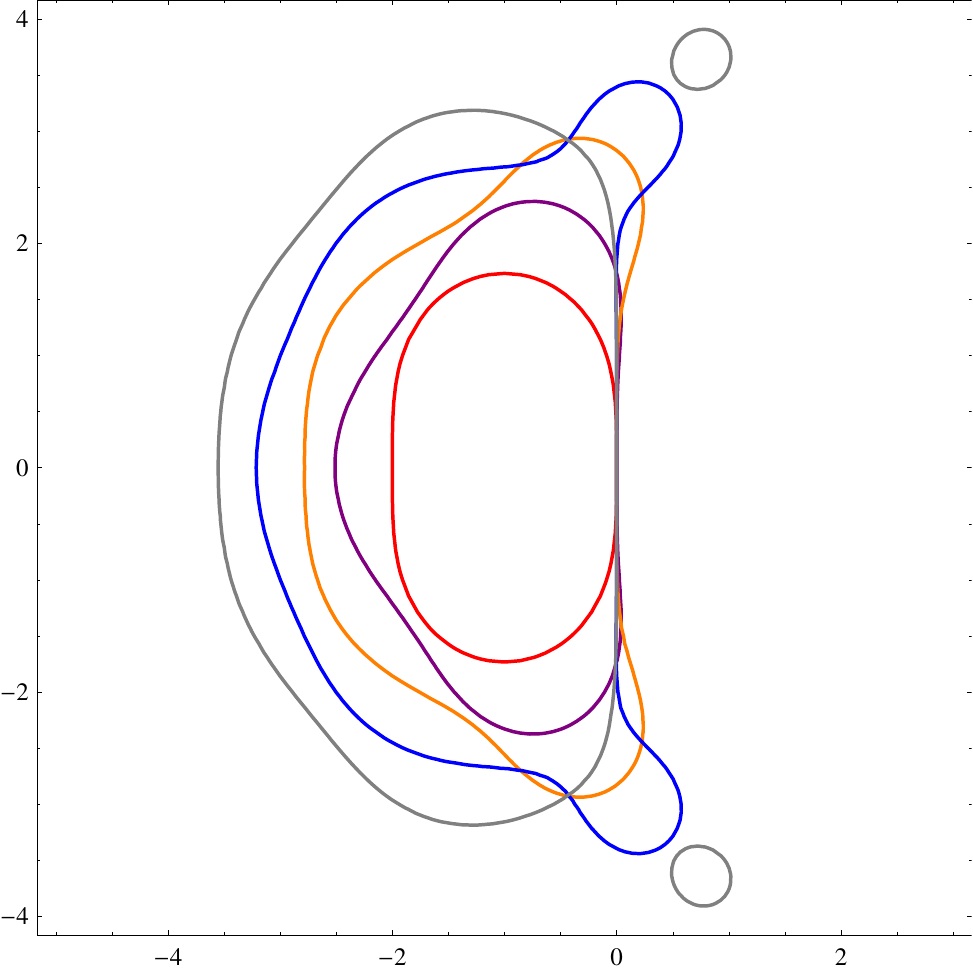}
        \subcaption{Contours of $\mathcal{R}^{r}_{ref}$.}
    \end{subfigure}
    \begin{subfigure}{0.09\textwidth}
    \end{subfigure}
    \begin{subfigure}{0.45\textwidth}
    \centering
        \includegraphics[width=\linewidth]{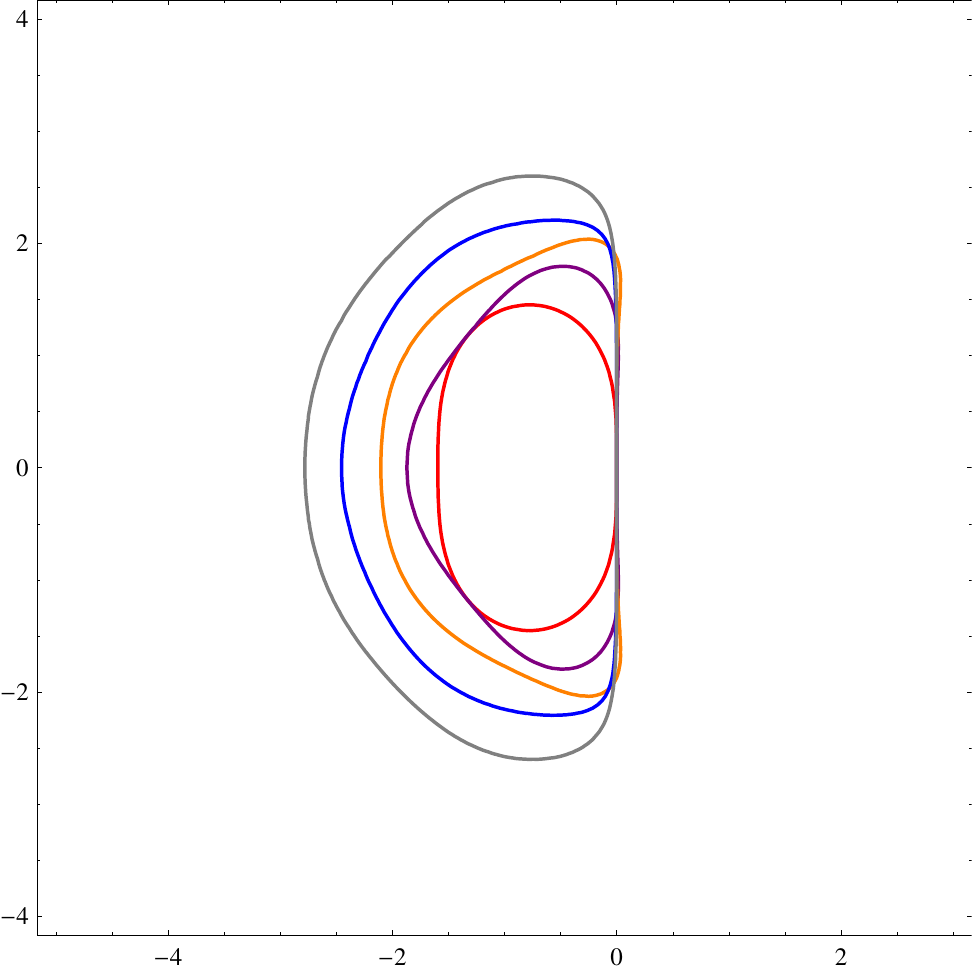}
        \subcaption{Contours of $\mathcal{R}^{r}_{MS}$.}
    \end{subfigure}
    \begin{subfigure}{0.45\textwidth}
    \centering
        \includegraphics[width=\linewidth]{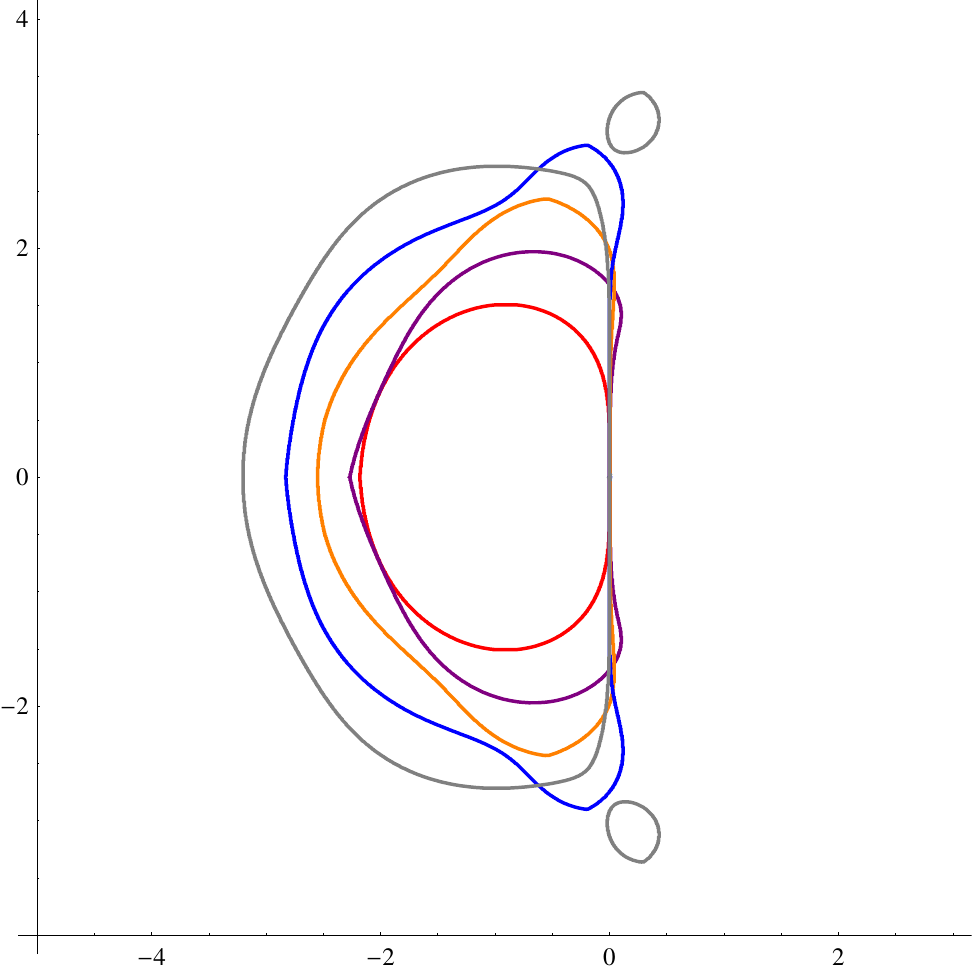}
        \subcaption{Contours of $\mathcal{R}^{r}_{AS}$.}
    \end{subfigure}
    \caption{Contours of reference sets, regions of mean-sqare stability, and regions of asymptotic stability of the randomized Taylor schemes for $r=0$ (red line), $r=1$ (purple line), $r=2$ (orange line), $r=3$ (blue line), and $r=4$ (gray line).}
    \label{fig:regions-ref-ms}
\end{figure}

\begin{remark} \label{remark:inclusions}
None of the following inclusions holds in general (for every $r\in\mathbb{N}$):
\begin{itemize}
\setlength{\itemsep}{4pt}
    \item[a)] $\mathcal{R}^{r}_{ref} \subset \mathcal{R}^{r+1}_{ref}$;
    \item[b)] $\mathcal{R}^{r}_{MS} \subset \mathcal{R}^{r+1}_{MS}$;
    \item[c)] $\mathcal{R}^{r}_{AS} \subset \mathcal{R}^{r+1}_{AS}$.
\end{itemize}

Let us consider $z_a=-0.6+2.8\,i$, $z_b=-0.03+1.9\,i$, and $z_c=-0.25+2.75\,i$. First two of these points are represented as the intersection of dashed lines in Figure \ref{fig:regions-inclusions}. We have
$$
    \Bigl| \sum_{j=0}^4 \frac{z_a^j}{j!} \Bigr|^2 = \frac{253\,409}{360\,000} < 1 \ \text{ and } \ \Bigl| \sum_{j=0}^5 \frac{z_a^j}{j!} \Bigr|^2 = \frac{5\,828\,357}{5\,625\,000} > 1.
$$
Thus, by \eqref{eq:ref}, $z_a\in \mathcal{R}_{ref}^2\setminus \mathcal{R}_{ref}^3$. Furthermore,
\begin{align*}
    F_2(z_b) & =\frac{2\,460\,549\,996\,776\,228\,711}{2\,520\,000\,000\,000\,000\,000}<1, \\[2pt] F_3(z_b) & =\frac{531\,703\,423\,127\,449\,318\,399\,669}{518\,400\,000\,000\,000\,000\,000\,000}>1,
\end{align*} 
which means that $z_b\in \mathcal{R}_{MS}^2\setminus \mathcal{R}_{MS}^3$, cf. \eqref{eq:MS}. Finally, using the \textit{scipy.integrate.quad} function in Python, we obtain the following estimates:
$$ G_3(z_c)\approx -0.41731 <0 \ \ \text{ and } \ \ G_4(z_c)\approx 0.06505 > 0. $$
Thus, by \eqref{eq:RAS-SP}, $z_c\in \mathcal{R}^{3}_{AS} \setminus \mathcal{R}^{4}_{AS}$.

\begin{figure}
    \centering
    \begin{subfigure}{0.45\textwidth}
    \centering
        \includegraphics[width=\linewidth]{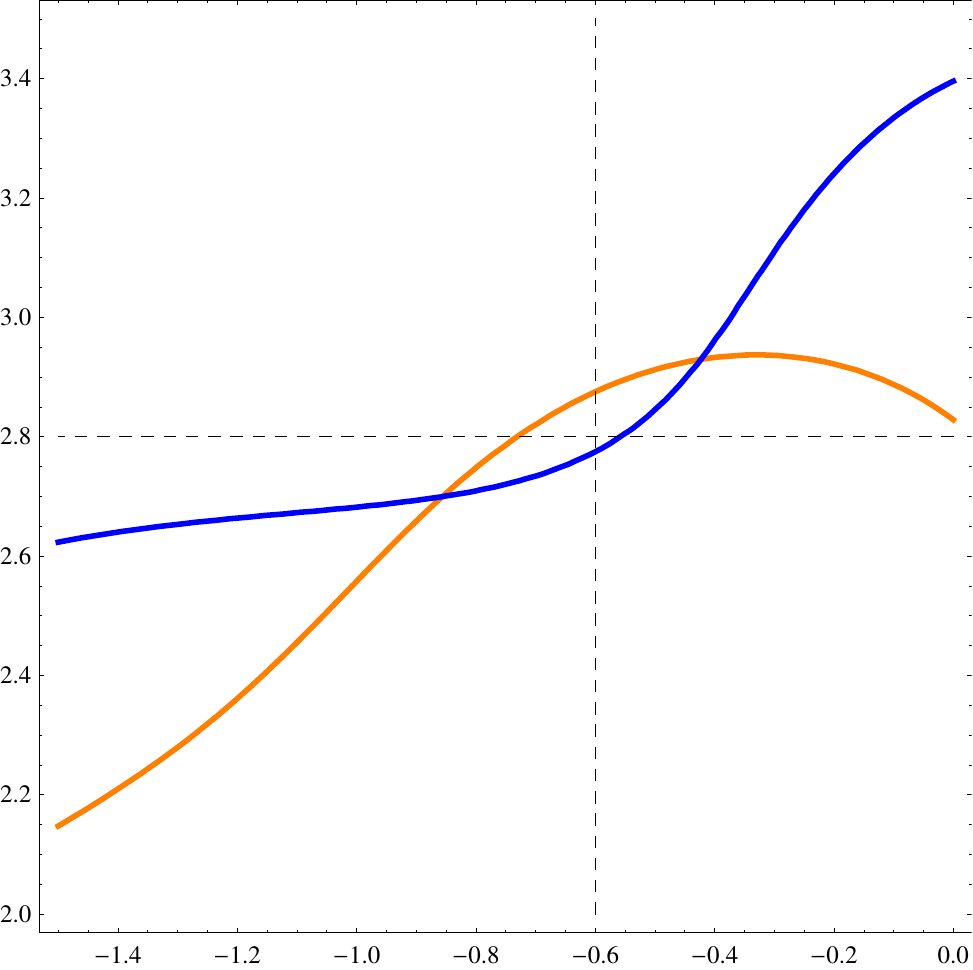}
        \subcaption{Contours of $\mathcal{R}^{2}_{ref}$ and $\mathcal{R}^{3}_{ref}$.}
    \end{subfigure}
    \begin{subfigure}{0.09\textwidth}
    \end{subfigure}
    \begin{subfigure}{0.45\textwidth}
    \centering
        \includegraphics[width=\linewidth]{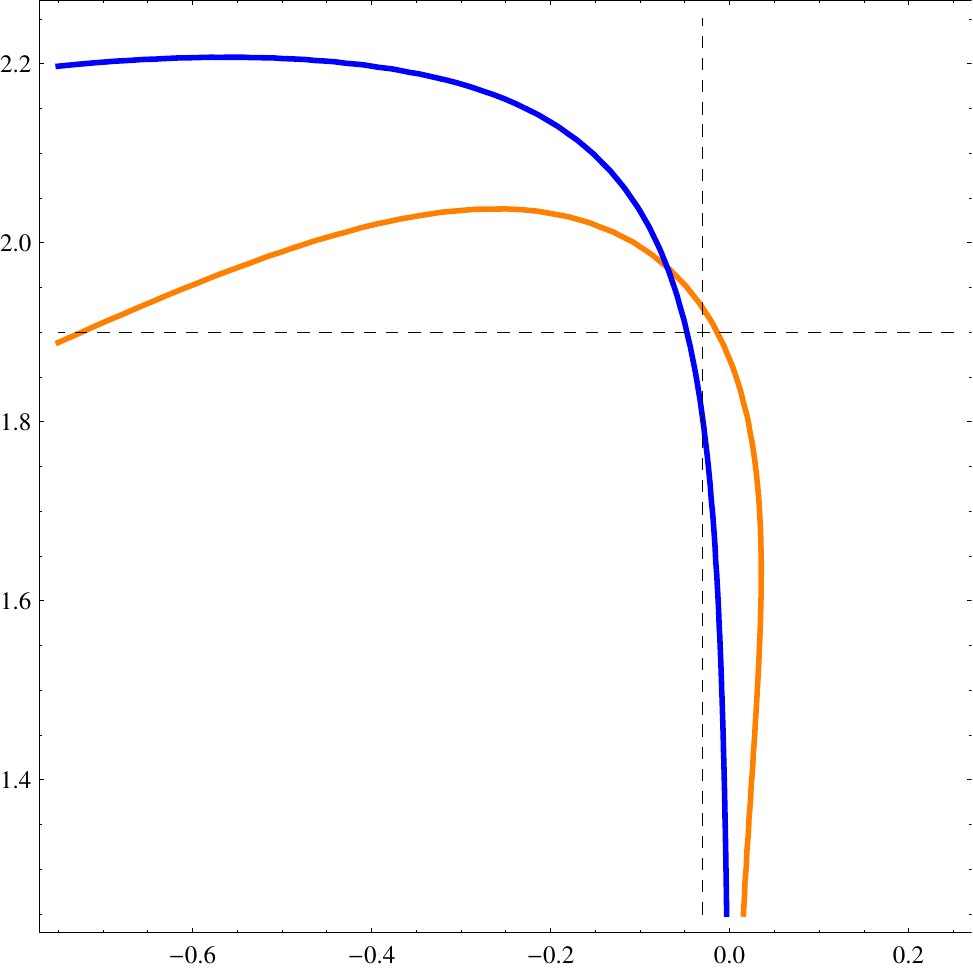}
        \subcaption{Contours of $\mathcal{R}^{2}_{MS}$ and $\mathcal{R}^{3}_{MS}$.}
    \end{subfigure}
    \caption{Fragments of contours of reference sets and regions of mean-sqare stability of the randomized Taylor schemes for $r=2$ (orange line) and $r=3$ (blue line).}
    \label{fig:regions-inclusions}
\end{figure}
\end{remark}

\begin{remark} \label{remark:as-ref}
In general, there is no inclusion between $\mathcal{R}^{r}_{AS}$ and $\mathcal{R}^{r}_{ref}$. 

To see this, let us consider $r=0$ and take $z_1=-2.1$, $z_2=-1+1.6\,i$. Then
\begin{align*}
\Bigl|1+z_1+\frac{z_1^2}{2}\Bigr| = 1.105  > 1 \ \text{ and } \ \Bigl|1+z_2+\frac{z_2^2}{2}\Bigr| = 0.78 < 1.
\end{align*}
On the other hand, we have estimated $G_0(z_1)\approx -0.07784$ and $G_0(z_2)\approx 0.13565$. By \eqref{eq:G}, \eqref{eq:RAS-SP} and \eqref{eq:ref}, we obtain $z_1\in \mathcal{R}^{0}_{AS}\setminus \mathcal{R}^{0}_{ref}$ and $z_2\in \mathcal{R}^{0}_{ref}\setminus \mathcal{R}^{0}_{AS}$.
\end{remark}

\begin{remark} \label{remark:Cminus}
In general, regions $\mathcal{R}^{r}_{ref}$, $\mathcal{R}^{r}_{MS}$ and $\mathcal{R}^{r}_{AS}$ are not included in $\mathbb{C}_-$. 

This inclusion is true for $r=0$, cf. Theorem 3(ii) and Theorem 4(iii) in \cite{randRK}, but does not hold for $r=1$, as shown in the following example:
$$F_1 (0.01+i) = \frac{19\,772\,000\,147\,001}{20\,000\,000\,000\,000}<1,$$
which combined with \eqref{eq:MS} and \eqref{eq:MSbound} implies that $0.01+i\in \mathcal{R}^1_{MS}\cap\mathcal{R}^1_{ref}\cap\mathcal{R}^1_{AS}\cap\mathbb{C}_+$.
\end{remark}

\begin{remark} \label{remark:convex}
In general, sets $\mathcal{R}^{r}_{ref}$, $\mathcal{R}^{r}_{MS}$ and $\mathcal{R}^{r}_{AS}$ are not convex.

Let $z_1=0.01+i$ and $z_2=0.01-i$. We know that $z_1,z_2\in \mathcal{R}^1_{MS}\cap\mathcal{R}^1_{ref}\cap\mathcal{R}^1_{AS}$, cf. Remark \ref{remark:Cminus} and Theorem \ref{theorem_MS_1}. On the other hand, $(\mathcal{R}^1_{MS}\cup\mathcal{R}^1_{ref} \cup\mathcal{R}^1_{AS})\cap\mathbb{R} \subset (-\infty,0)$ because $f_{r,z}(t) \geq 1$ for all $r\in\mathbb{N}$, $z\in [0,\infty)$, $t\in [0,1]$, and we have \eqref{eq:G}, \eqref{eq:RAS-SP}, \eqref{eq:ref}, \eqref{eq:MSbound}. Thus, $\frac{z_1+z_2}{2} = 0.01 \notin \mathcal{R}^1_{MS}\cup\mathcal{R}^1_{ref}\cup\mathcal{R}^1_{AS}$.
\end{remark}

\begin{remark} \label{remark:connect}
In general, sets $\mathcal{R}^{r}_{ref}$ and $\mathcal{R}^{r}_{AS}$ are not connected. 

Let us take $z_1 = 0.75+3.5\,i$ and $z_2=-0.25+2.5\,i$. We have 
\begin{align*}
    \Bigl| \sum_{j=0}^6 \frac{z_1^j}{j!} \Bigr|^2 & =\frac{27\,473\,196\,877\,335\,817\,540\,321}{121\,029\,087\,867\,608\,368\,152\,576}<1, \\[2pt] \Bigl| \sum_{j=0}^6 \frac{z_2^j}{j!} \Bigr|^2  & = \frac{48\,715\,333\,577\,673\,689\,545\,536\,241}{75\,643\,179\,917\,255\,230\,095\,360\,000}<1
\end{align*}
but 
$$\Bigl| \sum_{j=0}^6 \frac{\Bigl( \frac{z_1+z_2}{2} \Bigr)^j}{j!} \Bigr|^2   = \frac{9\,427\,129\,581\,150\,440\,422\,815\,049}{3\,025\,727\,196\,690\,209\,203\,814\,400}>1.$$
Thus, $z_3,z_4\in \mathcal{R}^4_{ref}$ but $\frac{z_3+z_4}{2}\notin \mathcal{R}^4_{ref}$, cf. \eqref{eq:ref}, which means that the set $\mathcal{R}^4_{ref}$ is disconnected.

For $z_3 = -0.5 + 2\,i$ and $z_4=0.25+3.25\,i$ we get 
$$G_4(z_3)\approx -0.50028<0, \ \ G_4(z_4)\approx -0.47024<0, \ \text{ and } \ G_4\Bigl(\frac{z_3+z_4}{2}\Bigr)\approx 0.03656>0,$$
which implies that $z_3,z_4\in\mathcal{R}^{4}_{AS}$ but $\frac{z_3+z_4}{2}\notin\mathcal{R}^{4}_{AS}$, see \eqref{eq:RAS-SP}. Hence, the set $\mathcal{R}^{4}_{AS}$ is disconnected.

In general, disconnectivity of the stability region would indicate that the method's behaviour for stiff problems is in a sense unpredictable -- taking smaller $h$ would not necessarily improve the method's performance. However, disconnected parts of the reference and asymptotic stability regions are observed in the right complex half-plane, which is out of interest in the context of A-stability. We conjecture that the intersection of each of the aforementioned stability regions with the left half-plane is connected.
\end{remark}

\section{Conclusions} \label{sec:concl}

We have established fundamental properties of probabilistic stability regions for randomized Taylor schemes. In particular, we have shown that notions of asymptotic stability and stability in probability are equivalent for this family of schemes, cf. \eqref{eq:RAS-SP}. Furthermore, we have proven openness and symmetry of all considered stability regions (see Theorem \ref{theorem_MS_1}), as well as their boundedness, cf. \eqref{eq:MSbound} in Theorem \ref{theorem_MS_2}. 

Although randomized Taylor schemes are not A-stable for any $r\in\mathbb{N}$ and in any probabilistic sense, the union of (asymptotic or mean-square) stability regions over all $r\in\mathbb{N}$ covers the entire left complex half-plane, see \eqref{eq:C-} in Theorem \ref{theorem_MS_2}. Hence, if the right-hand side function $f$ is sufficiently regular (in the most optimistic scenario, analytical), one may increase $r$ in order to prevent rapid variation in the approximated solution of a stiff problem. Otherwise, applicability of the methods studied in this paper is in practice limited to non-stiff problems.

Finally, we have ruled out a number of hypotheses concerning the potential properties of stability regions for randomized Taylor schemes, including their monotonicity (with respect to $r$) and some other potential inclusions, convexity, and connectivity, cf. Remarks \ref{remark:inclusions}--\ref{remark:connect}.

\section*{Acknowledgments}

I would like to thank Professor Paweł Przybyłowicz for many inspiring discussions while preparing this manuscript. 

I am also very grateful to the anonymous reviewer for valuable comments and suggestions that allowed me to improve the quality of the paper.

This research was funded in whole or in part by the National Science Centre, Poland, under project 2021/41/N/ST1/00135. For the purpose of Open Access, the author has applied a~CC-BY public copyright licence to any Author Accepted Manuscript (AAM) version arising from this submission. 

\appendix
\section{Auxiliary lemmas} \label{sec:appendix}

In this section, we prove technical lemmas which are necessary to establish equality \eqref{eq:RAS-SP}, Theorem \ref{theorem_MS_1}, and Theorem \ref{theorem_MS_2}.

\begin{lemma} \label{lemma:R_AS}
For each $z\in\mathbb{C}$ and each $r\in\mathbb{N}$, the random variable $\ln |f_{r,z}(\tau)|$ is square-integrable.
\end{lemma}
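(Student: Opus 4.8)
The plan is to show that $\ln|f_{r,z}(\tau)|$ fails to be square-integrable only if it is unbounded near a point, which happens precisely when $f_{r,z}$ has a zero in $[0,1]$, and to handle that singularity by a local change of variables. First I would dispose of the trivial case $z=0$: then $f_{r,0}(t)\equiv 1$, so $\ln|f_{r,0}(\tau)|\equiv 0$ is obviously square-integrable. So assume $z\neq 0$. Writing $\zeta=z^{r+2}/(r+1)!\neq 0$ and $q(t)=\sum_{j=0}^{r+1} z^j/j!$, we have $f_{r,z}(t)=q(t)+\zeta\, t^{r+1}$, a polynomial in $t$ of degree exactly $r+1$ with leading coefficient $\zeta\neq 0$. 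Since $f_{r,z}$ is continuous on the compact interval $[0,1]$, the function $t\mapsto |f_{r,z}(t)|$ attains a finite maximum $M$ there, so $\ln|f_{r,z}(t)|\le \ln M$ is bounded above on $[0,1]$; the only way $(\ln|f_{r,z}(\tau)|)^2$ can have infinite expectation is through the negative part of $\ln$, i.e. near zeros of $f_{r,z}$.

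Next I would localize around each such zero. As a nonzero polynomial of degree $r+1$, $f_{r,z}$ has at most $r+1$ roots in $\mathbb{C}$, hence finitely many (say $t_1,\dots,t_m$, $m\le r+1$) lie in $[0,1]$. Away from a neighborhood of $\{t_1,\dots,t_m\}$, $|f_{r,z}|$ is bounded below by a positive constant, so $\ln|f_{r,z}|$ is bounded there and contributes a finite amount to $\int_0^1 (\ln|f_{r,z}(t)|)^2\,\mathrm{d}t$. It remains to bound the integral over a small interval around each $t_i$. Near $t_i$ we may factor $f_{r,z}(t)=(t-t_i)^{k_i} g_i(t)$ where $k_i\ge 1$ is the multiplicity and $g_i$ is a polynomial with $g_i(t_i)\neq 0$; on a sufficiently small interval $(t_i-\delta,t_i+\delta)\cap[0,1]$ we have $c_i\le |g_i(t)|\le C_i$ for positive constants, so
\begin{equation*}
\bigl|\ln|f_{r,z}(t)|\bigr| \le k_i\bigl|\ln|t-t_i|\bigr| + \max\{|\ln c_i|,|\ln C_i|\}.
\end{equation*}
Squaring and using $(a+b)^2\le 2a^2+2b^2$, the contribution is controlled by a constant multiple of $\int_{t_i-\delta}^{t_i+\delta} \bigl(\ln|t-t_i|\bigr)^2\,\mathrm{d}t$ plus a finite constant. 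The remaining point is the elementary fact that $\int_0^\delta (\ln s)^2\,\mathrm{d}s<\infty$ (indeed $\int_0^\delta (\ln s)^2\,\mathrm{d}s = \delta\bigl((\ln\delta)^2 - 2\ln\delta + 2\bigr)$, which follows by integration by parts), so each local contribution is finite. Summing the finitely many local pieces and the bounded piece gives $\int_0^1 (\ln|f_{r,z}(t)|)^2\,\mathrm{d}t<\infty$, which is exactly square-integrability of $\ln|f_{r,z}(\tau)|$ since $\tau\sim U([0,1])$.

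The main obstacle is purely the logarithmic singularity at the zeros of $f_{r,z}$; everything else is continuity and compactness. The one thing to be careful about is that $f_{r,z}$ could in principle have a zero of multiplicity larger than one (or several zeros) in $[0,1]$, but since the bound $\int_0^\delta(\ln s)^2\,\mathrm{d}s<\infty$ is insensitive to the finite multiplicity factor $k_i$ and there are only finitely many roots, this causes no real difficulty. (If one prefers to avoid case analysis on multiplicities, one can alternatively bound $|f_{r,z}(t)|\ge c\prod_{i}|t-t_i|^{k_i}$ globally on $[0,1]$ for a suitable $c>0$, take logarithms, and reduce directly to finitely many integrals of the form $\int_0^1(\ln|t-t_i|)^2\,\mathrm{d}t$, each of which is finite by the same elementary estimate.)
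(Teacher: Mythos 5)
Your proof is correct and follows essentially the same route as the paper's: both isolate the zero(s) of $t\mapsto f_{r,z}(t)$ in $[0,1]$, split off the corresponding linear factor, and reduce to the elementary fact that $\int_0^\delta(\ln s)^2\,\mathrm{d}s<\infty$. The only difference is cosmetic: the paper exploits the explicit form $z_1t^{r+1}+z_2$ to exhibit the unique root $\beta=\alpha^{1/(r+1)}$ and the cofactor $\sum_{j=0}^{r}t^{r-j}\beta^{j}$ explicitly, whereas you argue for a general polynomial via the factor theorem (which also absorbs the multiplicity issue automatically).
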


\begin{proof}
We will show more, i.e.,
\begin{equation} \label{eq:finiteint}
    \int\limits_0^1  \ln^2 \bigl|z_1t^k+z_2\bigl|^2\,\mathrm{d}t < \infty
\end{equation}
for all $k\in\mathbb{Z}_+$ and $z_1,z_2\in\mathbb{C}$ such that $(z_1,z_2)\neq (0,0)$. Since the case $z_1=0$ is trivial, in the following we consider any $z_1, z_2 \in\mathbb{C}$ such that $z_1\neq 0$. If $-\frac{z_2}{z_1}\notin [0,1]$, then $z_1t^k+z_2 \neq 0$ for all $t\in [0,1]$ and \eqref{eq:finiteint} follows because the integrand is a continuous function of $t$ on the interval $[0,1]$. From this point we assume that $\alpha = -\frac{z_2}{z_1} \in [0,1]$. Then 
\begin{equation*} 
    \int\limits_0^1  \ln^2 \bigl|z_1t^k+z_2\bigl|\,\mathrm{d}t  = \int\limits_0^1 \Bigl( \ln |z_1| + \ln \bigl|t^k-\alpha\bigl| \Bigr)^2\,\mathrm{d}t.
\end{equation*}

If $\alpha=0$, both integrals
$$\int\limits_0^1 \ln \bigl|t^k-\alpha\bigl| \,\mathrm{d}t = k\int\limits_0^1 \ln t  \,\mathrm{d}t \ \text{ and } \ \int\limits_0^1 \bigl(\ln \bigl|t^k-\alpha\bigl|\bigr)^2 \,\mathrm{d}t = k^2\int\limits_0^1 (\ln t )^2 \,\mathrm{d}t$$
are finite. In case of $\alpha\in (0,1]$, there exists $\beta\in(0,1]$ such that $\alpha=\beta^k$ and we can write that
$$ \int\limits_0^1 \ln \bigl|t^k-\alpha\bigl| \,\mathrm{d}t = \int\limits_0^1 \ln |t-\beta | \,\mathrm{d}t + \int\limits_0^1 \ln \Bigl( \beta^{k-1} + \sum_{j=0}^{k-2} t^{k-1-j} \beta^j \Bigr) \,\mathrm{d}t.$$
The first integral above has one singularity but this is a well-known fact that it is finite. For the second one, we note that the integrand is a continuous function for $t\in [0,1]$. Furthermore, since $(a+b)^2\leq 2a^2+2b^2$ for any real numbers $a,b$, we obtain
$$\int\limits_0^1 \bigl( \ln \bigl|t^k-\alpha\bigl|  \bigr)^2\,\mathrm{d}t \leq 2\int\limits_0^1 \bigl(\ln |t-\beta |\bigr)^2 \,\mathrm{d}t + 2\int\limits_0^1 \Bigl[ \ln \Bigl( \beta^{k-1} + \sum_{j=0}^{k-2} t^{k-1-j} \beta^j \Bigr)\Bigr]^2\,\mathrm{d}t$$
and we may use similar arguments as before to justify that the above integrals are finite.
\end{proof}

\begin{lemma}
\label{lemma_F}
For each $r\in\mathbb{N}$, the function $F_{r}$ is continuous in $\mathbb{C}$.
\end{lemma}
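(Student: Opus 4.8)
The plan is to show that $F_r$ is continuous by dominated convergence, exploiting that the integrand $t \mapsto |f_{r,z}(t)|^2$ is, for each fixed $t \in [0,1]$, a polynomial in the real and imaginary parts of $z$, hence jointly continuous in $(z,t)$, and that a local uniform bound is available. Recall from \eqref{eq:f} that $f_{r,z}(t) = \sum_{j=0}^{r+1} \frac{z^j}{j!} + \frac{t^{r+1}}{(r+1)!} z^{r+2}$, so for $t \in [0,1]$ we have the crude bound $|f_{r,z}(t)| \le \sum_{j=0}^{r+2} \frac{|z|^j}{j!} \le e^{|z|}$, and therefore $|f_{r,z}(t)|^2 \le e^{2|z|}$.

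First I would fix $z_0 \in \mathbb{C}$ and take an arbitrary sequence $z_m \to z_0$. Without loss of generality $|z_m| \le R := |z_0| + 1$ for all $m$. For each $t \in [0,1]$, the map $z \mapsto |f_{r,z}(t)|^2 = f_{r,z}(t)\overline{f_{r,z}(t)}$ is continuous (it is a polynomial in $z$ and $\bar z$ with coefficients depending continuously on $t$), so $|f_{r,z_m}(t)|^2 \to |f_{r,z_0}(t)|^2$ pointwise in $t$. Moreover $|f_{r,z_m}(t)|^2 \le e^{2R}$ for all $m$ and all $t \in [0,1]$, and the constant function $e^{2R}$ is integrable on $[0,1]$. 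By the dominated convergence theorem,
\begin{equation*}
F_r(z_m) = \int_0^1 |f_{r,z_m}(t)|^2 \,\mathrm{d}t \longrightarrow \int_0^1 |f_{r,z_0}(t)|^2 \,\mathrm{d}t = F_r(z_0).
\end{equation*}
Since the sequence $z_m \to z_0$ was arbitrary, $F_r$ is continuous at $z_0$, and since $z_0$ was arbitrary, $F_r$ is continuous on $\mathbb{C}$.

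There is essentially no obstacle here: the only point requiring a moment's care is producing the dominating function, which is immediate from the elementary bound $|f_{r,z}(t)| \le e^{|z|}$ valid for $t \in [0,1]$ together with the local boundedness of $z_m$. (Alternatively, one could avoid measure theory altogether by noting that $F_r$ is a polynomial in $\Re(z)$ and $\Im(z)$—its coefficients being the integrals $\int_0^1 t^{\ell}\,\mathrm{d}t$ of the monomials arising from expanding $|f_{r,z}(t)|^2$—and polynomials are continuous; but the dominated-convergence argument is cleaner and generalizes to $G_r$.)
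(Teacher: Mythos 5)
Your proof is correct, but it takes a different route from the paper. The paper argues directly: it writes $\bigl|F_r(z+h)-F_r(z)\bigr| \le \mathbb{E}\bigl(|f_{r,z+h}(\tau)-f_{r,z}(\tau)|\cdot(|f_{r,z+h}(\tau)|+|f_{r,z}(\tau)|)\bigr)$, factors $|h|$ out of the difference $f_{r,z+h}-f_{r,z}$ using the identity $a^j-b^j=(a-b)\sum_k a^{j-1-k}b^k$, and obtains an explicit bound of the form $|h|\cdot C_z(|h|)$ that tends to $0$ as $h\to 0$ — in effect a local Lipschitz estimate for $F_r$. You instead invoke dominated convergence with a constant dominating function on a neighbourhood of $z_0$, using pointwise continuity of $z\mapsto|f_{r,z}(t)|^2$ for each fixed $t$. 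Both are valid; yours is shorter and, as you note, generalizes in spirit to the treatment of $G_r$ (where the paper does need more delicate integrability arguments because of the logarithmic singularity), while the paper's computation yields quantitative information about the modulus of continuity. Your parenthetical remark — that $F_r$ is simply a polynomial in $\Re(z)$ and $\Im(z)$, since integrating $|f_{r,z}(t)|^2$ over $t\in[0,1]$ term by term produces polynomial coefficients — is in fact the cleanest argument of all and requires no limit theorem. One small slip: the bound $|f_{r,z}(t)|\le\sum_{j=0}^{r+2}\frac{|z|^j}{j!}\le e^{|z|}$ is not quite right, because the last term of $f_{r,z}$ carries the denominator $(r+1)!$ rather than $(r+2)!$; the correct estimate is $|f_{r,z}(t)|\le\sum_{j=0}^{r+1}\frac{|z|^j}{j!}+\frac{|z|^{r+2}}{(r+1)!}\le(r+2)e^{|z|}$. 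This is immaterial for your argument, since any locally bounded dominating function suffices for the dominated convergence theorem.
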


\begin{proof}
Let us fix $r\in\mathbb{N}$ and consider any $z, h\in\mathbb{C}$. Then
\begin{align}
    \bigl| F_r(z+h) - F_r(z) \bigr| & \leq \mathbb{E} \Bigl| |f_{r,z+h}(\tau)|^2 - |f_{r,z}(\tau)|^2 \Bigl| \notag \\ & \leq \mathbb{E} \Bigl( |f_{r,z+h}(\tau) - f_{r,z}(\tau)| \cdot \bigr( |f_{r,z+h}(\tau)| + |f_{r,z}(\tau)| \bigl) \Bigr). \label{eq:F1}
\end{align}
We note that
\begin{align*}
    |f_{r,z+h}(\tau)| & \leq \sum_{j=0}^{r+1} \frac{|z+h|^j}{j!} + \frac{|z+h|^{r+2}}{(r+1)!}  \leq \sum_{j=0}^{r+1} \frac{(|z|+|h|)^j}{j!} + \frac{(|z|+|h|)^{r+2}}{(r+1)!} =: \alpha_{z}(h)
\end{align*}
with probability $1$. Inserting this bound into \eqref{eq:F1} yields
\begin{align*}
    \bigl| F_r(z+h) - & F_r(z) \bigr| \leq  \bigl( \alpha_z(h) + \alpha_z(0) \bigr) \cdot \mathbb{E} |f_{r,z+h}(\tau) - f_{r,z}(\tau)| 
    \\ & = |h| \cdot \bigl( \alpha_z(h) + \alpha_z(0) \bigr) \cdot \mathbb{E}\Bigl| \sum_{j=1}^{r+1}\sum_{k=0}^{j-1} \frac{(z+h)^{j-1-k}z^{k}}{j!} + \tau^{r+1}\sum_{k=0}^{r+1} \frac{(z+h)^{r+1-k}z^k}{(r+1)!} \Bigr| \\ & \leq |h| \cdot \bigl( \alpha_z(h) + \alpha_z(0) \bigr) \cdot  \Bigl( \sum_{j=1}^{r+1} \frac{(|z|+|h|)^{j-1}}{(j-1)!} + (r+2)\cdot \frac{(|z|+|h|)^{r+1}}{(r+1)!} \Bigr).
\end{align*}
The last expression tends to $0$ when $h\to 0$, which completes the proof.
\end{proof}

\begin{lemma} \label{lemma:aux}
Let $k\in\mathbb{Z}_+$ and $\alpha\in [0,1]$. Then for each $\varepsilon >0$ there exists $\delta >0$ such that for any $z\in\mathbb{C}$ with $|z-\alpha|<\delta$ we have
\begin{equation*}
\int\limits_{A_\delta} \bigl| \ln |t^k-z| \bigr|\,\mathrm{d}t  < \varepsilon,
\end{equation*}
where $A_\delta = \{ t\in [0,1] \colon |t^k-\alpha|\leq \delta\}$.
\end{lemma}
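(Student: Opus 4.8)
The plan is to isolate the singular part: on the small set $A_\delta$ the integrand equals $-\ln|t^k-z|$, which has at worst an integrable logarithmic singularity, so everything reduces to the elementary fact that the integral of such a function over a set of small measure is small --- \emph{uniformly in the location of the singularity}. Making that uniformity explicit (first in $z$, then in the real singularity it produces) is the only real difficulty; the rest is bookkeeping.

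First I would fix the sign: for $\delta<\tfrac12$ and $t\in A_\delta$ one has $|t^k-z|\le|t^k-\alpha|+|\alpha-z|<2\delta<1$, so $|\ln|t^k-z||=-\ln|t^k-z|$ on $A_\delta$. Next I would remove the complex parameter $z$ using the inequality $|a-z|\ge\bigl|a-|z|\bigr|$, valid for every real $a\ge0$ and every $z\in\mathbb{C}$ (it follows at once from $|a-z|^2-(a-|z|)^2=2a(|z|-\Re z)\ge0$). Applied with $a=t^k$ and $s:=|z|\ge0$, and noting $|s-\alpha|\le|z-\alpha|<\delta$ because $\alpha\ge0$, this gives
$$\int_{A_\delta}\bigl|\ln|t^k-z|\bigr|\,\mathrm{d}t\ \le\ \int_{A_\delta}\bigl(-\ln|t^k-s|\bigr)\,\mathrm{d}t ,$$
a quantity not depending on $z$ (and the singularity $t^k=s$ may now genuinely lie in $[0,1]$, which will be harmless). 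Finally I would factor $t^k-s=(t-\beta)\,q(t)$ with $\beta=s^{1/k}\ge0$ and $q(t)=\sum_{j=0}^{k-1}\beta^j t^{k-1-j}\ge t^{k-1}$. Since $q(t)\ge t^{k-1}$ and $A_\delta\subseteq[0,1]$,
$$-\ln|t^k-s|=-\ln|t-\beta|-\ln q(t)\ \le\ -\ln|t-\beta|+(k-1)(-\ln t)$$
for a.e.\ $t\in A_\delta$, so the problem is reduced to estimating $\int_{A_\delta}(-\ln|t-\beta|)\,\mathrm{d}t$ and $\int_{A_\delta}(-\ln t)\,\mathrm{d}t$.

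Both integrals are controlled by the standard rearrangement (``bathtub'') principle: the integral of a nonnegative function over a set of measure $m$ is maximised by taking that set to be a super-level set. Since $-\ln t$ is decreasing on $(0,1]$ and $(-\ln|t-\beta|)^+$ peaks at $t=\beta$, for any measurable $E\subseteq[0,1]$ with $|E|\le1$,
$$\int_E(-\ln t)\,\mathrm{d}t\le|E|\bigl(1-\ln|E|\bigr),\qquad\int_E(-\ln|t-\beta|)\,\mathrm{d}t\le|E|\bigl(1+\ln2-\ln|E|\bigr),$$
the second bound being uniform in $\beta$. Taking $E=A_\delta$, the whole integral is bounded by $\Phi(|A_\delta|)$, where $\Phi(m)=(k-1)m(1-\ln m)+m(1+\ln2-\ln m)\to0$ as $m\to0^+$.

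It remains only to observe that $|A_\delta|\to0$ as $\delta\to0^+$: the sets $A_\delta$ decrease, as $\delta\downarrow0$, to $\{t\in[0,1]:t^k=\alpha\}=\{\alpha^{1/k}\}$, a Lebesgue-null set, so downward continuity of measure gives $|A_\delta|\to0$. Hence, given $\varepsilon>0$, it suffices to pick $\delta\in(0,\tfrac12)$ so small that $\Phi(|A_\delta|)<\varepsilon$, and this single $\delta$ serves for all $z$ with $|z-\alpha|<\delta$. The step I expect to need the most care is the passage to $s=|z|$ together with the $\beta$-uniformity of the rearrangement estimate, since this is exactly what yields the uniformity in $z$ required by the statement. (Alternatively one can skip the rearrangement principle and split into the cases $\alpha=0$, where $A_\delta=[0,\delta^{1/k}]$ and both integrals are computed directly, and $\alpha>0$, where $-\ln t$ is bounded on $A_\delta$ and, using that $q$ is bounded below on $A_\delta$, the set $A_\delta$ lies in an interval of length $O(\delta)$ about $\beta$; the unified version above merely avoids this case analysis.)
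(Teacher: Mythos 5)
Your proof is correct, and its skeleton coincides with the paper's: fix the sign of the integrand on $A_\delta$ (for $\delta<\tfrac12$), replace the complex parameter $z$ by a real one, factor $t^k-s=(t-\beta)\,q(t)$ with $q(t)\ge t^{k-1}$, and reduce everything to the smallness of the integral of a logarithm over a small set surrounding its singularity. You differ in two implementation choices, both sound. First, you project onto $s=|z|$ via $|t^k-z|\ge\bigl|t^k-|z|\bigr|$, whereas the paper projects onto $\Re(z)$ via $|t^k-z|\ge|t^k-\Re(z)|$; since $|z|\ge 0$ automatically, your choice eliminates the paper's separate case $x\le 0$ (which it handles by $|t^k-x|\ge t^k$). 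Second, where the paper identifies $A_\delta$ explicitly as an interval $[d(\delta),\sqrt[k]{\alpha+\delta}]$ and compares with a centered integral of $\ln|t|$ over a shrinking symmetric interval, you invoke the $\beta$-uniform rearrangement bound $\int_E(-\ln|t-\beta|)\,\mathrm{d}t\le |E|\,(1+\ln 2-\ln|E|)$ together with $|A_\delta|\downarrow 0$ by downward continuity of measure; this yields an explicit modulus $\Phi(|A_\delta|)$ and never requires describing $A_\delta$ geometrically, at the modest cost of citing the bathtub principle. The uniformity in $z$ that the statement demands is secured in your version exactly where you predicted: in the passage to $s=|z|$ and in the $\beta$-independence of the rearrangement bound.
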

\begin{proof}
Let $k\in\mathbb{Z}_+$, $\alpha\in [0,1]$ and $\varepsilon>0$ be fixed. Let us take into consideration only $\delta\in(0,\frac12)$. Then $|t^k-z| \leq |t^k-\alpha|+|z-\alpha| <2\delta <1$ for all $z\in\mathbb{C}$ such that $|z-\alpha|<\delta$ and all $t\in A_\delta$. As a result,
\begin{equation} \label{eq:step1}
	\int\limits_{A_\delta} \bigl| \ln |t^k-z|\bigr|\,\mathrm{d}t = -\int\limits_{A_\delta} \ln |t^k-z|\,\mathrm{d}t.
\end{equation}
Moreover,
\begin{equation} \label{eq:step2}
	0\geq \int\limits_{A_\delta} \ln |t^k-z|\,\mathrm{d}t \geq \int\limits_{A_\delta} \ln |t^k-\Re(z)|\,\mathrm{d}t \geq \inf_{\substack{x\in\mathbb{R} \\ |x-\alpha|<\delta}} \ \ \int\limits_{A_\delta} \ln |t^k-x|\,\mathrm{d}t
\end{equation}
because $|t^k-z| =\sqrt{\Re^2(t^k-z)+\Im^2(t^k-z)}\geq |\Re(t^k-z)| = |t^k-\Re(z)|$ and similarly $|\alpha-\Re(z)|\leq |\alpha-z|<\delta$. For $x\leq 0$ we obtain 
\begin{equation} \label{eq:step4}
    \int\limits_{A_\delta} \ln |t^k-x|\,\mathrm{d}t \geq k\int\limits_{A_\delta} \ln t \,\mathrm{d}t > -\varepsilon
\end{equation}
if $\delta$ is sufficiently close to $0$. Now let us consider $x\in (0,\alpha+\delta)$ and define $y=\sqrt[k]{x}$,
$$d(\delta) = \left\{ \begin{array}{ll} 0, & \text{when } \alpha-\delta\leq 0, \\ \sqrt[k]{\alpha-\delta}, & \text{when } \alpha-\delta >0. \end{array} \right.$$
Then
\begin{align} 
    \int\limits_{A_\delta} \ln |t^k-x|\,\mathrm{d}t & = \int\limits_{d(\delta)}^{\sqrt[k]{\alpha+\delta}} \ln |t-y|\,\mathrm{d}t + \int\limits_{d(\delta)}^{\sqrt[k]{\alpha+\delta}} \ln \Bigl( \sum_{j=0}^{k-1} t^{k-1-j}y^j \Bigr)\,\mathrm{d}t \notag \\ & \geq \int\limits_{d(\delta)-y}^{\sqrt[k]{\alpha+\delta}-y} \ln |t|\,\mathrm{d}t + (k-1)\int\limits_{d(\delta)}^{\sqrt[k]{\alpha+\delta}} \ln t\,\mathrm{d}t \notag \\  & \geq \ \ k \!\!\!\! \int\limits_{-\frac12(\sqrt[k]{\alpha+\delta}-d(\delta))}^{\frac12(\sqrt[k]{\alpha+\delta}-d(\delta))} \!\!\!\!\!\!\!\!\!\! \ln |t|\,\mathrm{d}t > -\varepsilon \label{eq:step5}
\end{align}
for $\delta$ sufficiently close to $0$. In the last two lines of \eqref{eq:step5} we used the fact that
$$\int\limits_{a}^{b} \ln |t|\,\mathrm{d}t \geq \int\limits_{-\frac{b-a}{2}}^{\frac{b-a}{2}} \ln |t|\,\mathrm{d}t $$
for all $a,b\in\mathbb{R}$ such that $a<b$; moreover, the last integral tends to $0$ when $\frac{b-a}{2}\to 0$. By \eqref{eq:step1}, \eqref{eq:step2}, \eqref{eq:step4} and \eqref{eq:step5} we get the desired claim.
\end{proof}

The following Lemma \ref{A-cont} is a generalization of Proposition 1 from \cite{randRK}.

\begin{lemma} \label{A-cont}
For each $r\in\mathbb{N}$, the function $G_r$ is continuous in $\mathbb{C}$.
\end{lemma}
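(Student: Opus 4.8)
The plan is to show that $G_r$ is continuous at an arbitrary point $z_0 \in \mathbb{C}$. The only source of trouble is that the integrand $\ln|f_{r,z}(t)|$ has logarithmic singularities where $f_{r,z}$ vanishes on $[0,1]$; away from such points the integrand depends continuously (indeed smoothly) on $(z,t)$ and dominated convergence handles everything. So the strategy is to split the unit interval into a ``bad set'' near the zeros of $f_{r,z_0}$ (and the nearby zeros of $f_{r,z}$ for $z$ close to $z_0$) and a ``good set'' away from them, control the bad set uniformly using Lemma~\ref{lemma:aux}, and control the good set by continuity plus a uniform $L^1$ bound obtained from Lemma~\ref{lemma:R_AS} and Cauchy--Schwarz.

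First I would record the factorization that makes Lemma~\ref{lemma:aux} applicable. Write $f_{r,z}(t) = \frac{z^{r+2}}{(r+1)!}\bigl(t^{r+1} - h_r(z)\bigr)$ for $z\neq 0$, where $h_r(z) = -\frac{(r+1)!}{z^{r+2}}\sum_{j=0}^{r+1}\frac{z^j}{j!}$ is a rational (hence continuous) function of $z$ on $\mathbb{C}\setminus\{0\}$; correspondingly
\begin{equation*}
G_r(z) = \ln\!\Bigl(\frac{|z|^{r+2}}{(r+1)!}\Bigr) + H_r(h_r(z)), \qquad H_r(w) := \int_0^1 \ln\bigl|t^{r+1} - w\bigr|\,\mathrm{d}t .
\end{equation*}
Since $z\mapsto \ln(|z|^{r+2}/(r+1)!)$ and $z\mapsto h_r(z)$ are continuous away from $0$, it suffices to prove that $H_r$ is continuous on $\mathbb{C}$; then $G_r$ is continuous on $\mathbb{C}\setminus\{0\}$, and continuity at $z=0$ follows separately because $f_{r,z}\to 1$ uniformly on $[0,1]$ as $z\to 0$, so $\ln|f_{r,z}(t)|\to 0$ uniformly and $G_r(z)\to 0 = G_r(0)$. (This is exactly the decomposition referenced in \eqref{eq:RASbound}, with \eqref{eq:G2}, \eqref{eq:h}, \eqref{eq:H} naming these objects.)

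Now fix $w_0\in\mathbb{C}$ and $\varepsilon>0$; I claim $|H_r(w) - H_r(w_0)|<\varepsilon$ for $w$ near $w_0$. If $t^{r+1}=w_0$ has no solution in $[0,1]$, the integrand is uniformly continuous on a neighborhood of $\{w_0\}\times[0,1]$ and we are done by a trivial estimate. Otherwise let $\alpha\in[0,1]$ be the unique value with $\alpha^{r+1}=w_0$ (uniqueness because $t\mapsto t^{r+1}$ is strictly increasing on $[0,1]$; note $w_0\in[0,1]$ here), and for $\delta>0$ set $A_\delta = \{t\in[0,1]\colon |t^{r+1}-\alpha|\le\delta\}$ — wait, to match Lemma~\ref{lemma:aux} I take $A_\delta=\{t\in[0,1]\colon |t^{r+1}-w_0|\le\delta\}$. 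By Lemma~\ref{lemma:aux} (applied with $k=r+1$, the role of $\alpha$ played by $w_0\in[0,1]$), choose $\delta>0$ so that $\int_{A_\delta}|\ln|t^{r+1}-w||\,\mathrm{d}t < \varepsilon/4$ for all $w$ with $|w-w_0|<\delta$; in particular this also bounds $\int_{A_\delta}|\ln|t^{r+1}-w_0||\,\mathrm{d}t$. On the complement $[0,1]\setminus A_\delta$ we have $|t^{r+1}-w|\ge \delta - |w-w_0| \ge \delta/2$ once $|w-w_0|<\delta/2$, so the integrand is bounded and jointly continuous there; hence $\int_{[0,1]\setminus A_\delta}\bigl|\ln|t^{r+1}-w| - \ln|t^{r+1}-w_0|\bigr|\,\mathrm{d}t < \varepsilon/2$ for $w$ sufficiently close to $w_0$, by dominated convergence (or uniform continuity on the relevant compact set). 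Combining the two pieces with the triangle inequality gives $|H_r(w)-H_r(w_0)|<\varepsilon$, as desired.

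The main obstacle is the bad set near the zero: one must ensure the estimate there is uniform in $w$ in a whole neighborhood of $w_0$, not just at $w_0$ itself, since the location of the zero of $t^{r+1}-w$ moves with $w$. This is precisely what Lemma~\ref{lemma:aux} was designed to deliver, so with that lemma in hand the argument is routine; the only mild subtlety worth stating carefully is the reduction $f_{r,z}\mapsto t^{r+1}-h_r(z)$ and the fact that $t\mapsto t^{r+1}$ is a bijection of $[0,1]$ onto $[0,1]$, which guarantees at most one singular point and lets the single-singularity Lemma~\ref{lemma:aux} suffice.
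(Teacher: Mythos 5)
Your proposal is correct and follows essentially the same route as the paper: the same factorization $G_r(z)=\ln\bigl(|z|^{r+2}/(r+1)!\bigr)+H_r(h_r(z))$, a separate squeeze/uniform-convergence argument at $z=0$, and a proof that $H_r$ is continuous by splitting $[0,1]$ into the bad set $A_\delta$ controlled uniformly via Lemma~\ref{lemma:aux} and its complement handled by a routine uniform estimate. The only cosmetic difference is that the paper uses a Lagrange mean value theorem argument for the good set where you invoke uniform continuity/dominated convergence, and your opening mention of Lemma~\ref{lemma:R_AS} and Cauchy--Schwarz is never actually needed.
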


\begin{proof}[Proof of Lemma \ref{A-cont}]
To see that $G_r$ is continuous in $0$, let us observe that 
$$ 0 <  1-\sum_{j=1}^{r+1} \frac{|z|^j}{j!} - \frac{|z|^{r+2}}{(r+1)!}  \leq \bigl| f_{r,z}(t) \bigr| \leq  1 + \sum_{j=1}^{r+1} \frac{|z|^j}{j!} + \frac{|z|^{r+2}}{(r+1)!}$$ for $|z|$ sufficiently close to $0$ and for all $t\in [0,1]$. Hence,
$$  \ln \Bigl( 1-\sum_{j=1}^{r+1} \frac{|z|^j}{j!} - \frac{|z|^{r+2}}{(r+1)!} \Bigr) \leq  G_r(z) \leq  \ln \Bigl( 1+ \sum_{j=1}^{r+1} \frac{|z|^j}{j!} + \frac{|z|^{r+2}}{(r+1)!}  \Bigr), $$
which implies that $\displaystyle \lim_{z\to 0} G_r(z) = 0 = G_r(0)$.

Let us consider $z\in\mathbb{C}\setminus\{0\}$. Then $G_r(z)$ can be expressed as
\begin{equation} \label{eq:G2}
    G_r(z)=\ln\Bigl(\frac{|z|^{r+2}}{(r+1)!} \Bigr) + H_r(h_r(z)),
\end{equation}
where 
\begin{align}
    h_r\colon & \mathbb{C}\setminus\{0\}\ni z \mapsto -\sum_{j=0}^{r+1} \frac{(r+1)!}{j!\cdot z^{r+2-j}} \in\mathbb{C}, \label{eq:h} \\
    H_r\colon & \mathbb{C} \ni z \mapsto \int\limits_0^1 \ln |t^{r+1}-z|\,\mathrm{d}t\in\mathbb{R}. \label{eq:H}
\end{align}
To complete the proof, it suffices to show that $H_r$ is continuous in $\mathbb{C}$. 

Firstly, let us consider a fixed $z\in\mathbb{C}\setminus [0,1]$. Let us define $$\delta = \min_{t\in [0,1]} |z-t^{r+1}|$$ and take any $(z_n)_{n=1}^\infty \subset \mathbb{C}$ such that $z_n \to z$, $z_n\neq z$ and $|z_n-z|<\frac{\delta}{2}$ for $n\in\mathbb{Z}_+$. Then for all $t\in [0,1]$ we have $|z_n-t^{r+1}|\geq |z-t^{r+1}|-|z_n-z|>\frac{\delta}{2}$. By Lagrange's mean value theorem and the triangle inequality, we obtain
$$\left|\frac{\ln |t^{r+1}-z|-\ln |t^{r+1}-z_n|}{z-z_n}\right|\leq \left|\frac{\ln |t^{r+1}-z|-\ln |t^{r+1}-z_n|}{|t^{r+1}-z|- |t^{r+1}-z_n|}\right|=\frac{1}{\xi(t,n)}$$ for some $\xi(t,n)$ falling between $|t^{r+1}-z|$ and $|t^{r+1}-z_n|$, provided that $|t^{r+1}-z|\neq |t^{r+1}-z_n|$. Since both these numbers are greater than $\frac{\delta}{2}$, we have $\xi(t,n)>\frac{\delta}{2}$ and thus 
\begin{equation} \label{eq:uniform_conv}
\bigl|\ln |t^{r+1}-z|-\ln |t^{r+1}-z_n|\bigr| < \frac{2}{\delta} \cdot |z-z_n|
\end{equation} 
for all $t\in [0,1]$. Note that the above inequality holds also when $|t^{r+1}-z| = |t^{r+1}-z_n|$. From \eqref{eq:uniform_conv} it follows that $t\mapsto \ln|t^{r+1}-z_n| $ converges uniformly to $t\mapsto \ln|t^{r+1}-z| $ for $t\in [0,1]$. Hence, 
\begin{equation*}
\lim_{n\to\infty}  \int\limits_0^1 \ln |t^{r+1}-z_n| \,\mathrm{d}t =  \int\limits_0^1  \ln |t^{r+1}-z| \,\mathrm{d}t
\end{equation*} 
and continuity of $G_r$ in each point $z\in\mathbb{C}\setminus [0,1]$ is proven.

Now let us consider a fixed $z\in [0,1]$ and set $\varepsilon >0$. By Lemma \ref{lemma:aux}, there exists $\delta >0$ such that for any $\zeta\in\mathbb{C}$ with $|\zeta-z|<\delta$ we have
\begin{equation} \label{eq1}
\int\limits_{A_\delta} \bigl| \ln |t^{r+1}-\zeta| \bigr|\,\mathrm{d}t  < \frac{\varepsilon}{3},
\end{equation}
where $A_\delta = \{ t\in [0,1] \colon |t^{r+1}-z|\leq \delta\}$. Let us consider any $(z_n)_{n=1}^\infty \subset \mathbb{C}$ such that $z_n\to z$, $z_n\neq z$ and $|z_n-z|<\frac{\delta}{2}$ for all $n\in\mathbb{Z}_+$. For each $t\in [0,1]\setminus A_\delta$ we have $|t^{r+1}-z|>\delta$ and $|t^{r+1}-z_n|>\frac{\delta}{2}$, $n\in\mathbb{Z}_+$. Thus, we can prove the uniform convergence of $t\mapsto \ln|t^{r+1}-z_n| $ to $t\mapsto \ln|t^{r+1}-z| $ for $t\in [0,1]\setminus A_\delta$ in a similar fashion as in the case of $z\in\mathbb{C}\setminus [0,1]$, see \eqref{eq:uniform_conv}. Hence, for sufficiently big $n$ we obtain
\begin{equation} \label{eq4}
I = \Bigl| \ \int\limits_{[0,1]\setminus A_\delta}   \ln |t^{r+1}-z_n| \,\mathrm{d}t - \int\limits_{[0,1]\setminus A_\delta}   \ln |t^{r+1}-z| \,\mathrm{d}t\Bigr|<\frac{\varepsilon}{3}.
\end{equation}
By \eqref{eq1} and \eqref{eq4}, 
$$ \Bigl| \int\limits_0^1  \ln |t^{r+1}-z_n| \,\mathrm{d}t -  \int\limits_0^1  \ln |t^{r+1}-z| \,\mathrm{d}t\Bigr| \leq I +\Bigl| \int\limits_{A} \ln |t^{r+1}-z_n| \,\mathrm{d}t \Bigr|+ \Bigl|\int\limits_{A} \ln |t^{r+1}-z| \,\mathrm{d}t\Bigr| < \varepsilon$$
for sufficiently big $n$. This concludes the proof.
\end{proof}


\begin{thebibliography}{22}

\bibitem{randRK}
\textsc{T. Bochacik, M. Goćwin, P. M. Morkisz, and P. Przybyłowicz}, \emph{Randomized Runge-Kutta method -- Stability and convergence under inexact information}, J. Complex. 65 (2021), 101554 (21 pages).

\bibitem{randEuler}
\textsc{T. Bochacik and P. Przybyłowicz}, \emph{On the randomized Euler schemes for ODEs under inexact information}, Numer. Algorithms (2022), https://doi.org/10.1007/s11075-022-01299-7.

\bibitem{daun1}
\textsc{T. Daun}, \emph{On the randomized solution of initial value problems}, J. Complex. 27 (2011), 300--311.

\bibitem{daun2}
\textsc{T. Daun and S. Heinrich}, \emph{Complexity of parametric initial value problems in Banach spaces}, J. Complex. 30 (2014), 392--429.

\bibitem{hairer}
\textsc{E. Hairer and G. Wanner}, \emph{Solving Ordinary Differential Equations II, Stiff and Differential-
Algebraic Problems, 2nd ed.}, Springer-Verlag, Berlin, 1996.

\bibitem{backward_euler} 
\textsc{M. Eisenmann, M. Kov\'{a}cs, R. Kruse, and S. Larsson}, \emph{On a randomized backward Euler method for nonlinear evolution equations with time-irregular coefficients}, Found. Comp. Math. 19 (2019), 1387--1430.

\bibitem{gocwin}
\textsc{M. Goćwin and M. Szcz\k{e}sny}, {\emph Randomized and quantum algorithms for solving initial-value problems in ordinary differential equations of order $k$}, Opusc. Math. 28 (2008), 247--277.

\bibitem{HeinMilla} 
\textsc{S. Heinrich and B. Milla}, \emph{The randomized complexity of initial value problems}, J. Complex. 24 (2008), 77--88.

\bibitem{higham2000}
\textsc{D.J. Higham}, Mean-square and asymptotic stability of the stochastic theta method, {\em Siam J. Numer. Anal.} {\bf 38} (2000), 753--769.

\bibitem{JenNeuen}
\textsc{A. Jentzen and A. Neuenkirch}, \emph{A random Euler scheme for Carath\'eodory differential equations}, J. Comput. Appl. Math. 224 (2009), 346--359.


\bibitem{Kac1}
\textsc{B. Kacewicz}, \emph{Almost optimal solution of initial-value problems by randomized and quantum algorithms}, J. Complex. 22 (2006), 676--690.

\bibitem{Kac2}
B. Kacewicz, \emph{Improved bounds on the randomized and quantum complexity of initial-value problems}, J. Complex.
21 (2005), 740--756.

\bibitem{Kac3}
B. Kacewicz, \emph{Randomized and quantum algorithms yield a speed-up for initial-value problems}, J. Complex. 20
(2004) 821--834.

\bibitem{KruseWu_1} 
\textsc{R. Kruse and Y. Wu}, \emph{Error Analysis of Randomized Runge–Kutta Methods for Differential Equations with Time-Irregular Coefficients}, Comput. Methods Appl. Math. 17 (2017), 479--498.

\bibitem{mitsui}
\textsc{T. Mitsui and Y. Saito}, \emph{Stability Analysis of Numerical Schemes for Stochastic Differential Equations}, SIAM J. Numer. Anal., 33 (1996), 2254--2267.

\bibitem{stengle1}
\textsc{G. Stengle}, \emph{Numerical methods for systems with measurable coefficients}, Appl. Math. Lett. 3 (1990), 25--29.

\bibitem{stengle2}
\textsc{G. Stengle}, \emph{Error analysis of a randomized numerical method}, Numer. Math. 70 (1995), 119--128.

\end{thebibliography}
\end{document}